\documentclass[12pt]{amsart}
\usepackage{amsmath,amscd}  %to create commutative diagrams
\usepackage{color}

%% Numbering of theorems etc.

\numberwithin{equation}{section}

\newtheorem{theorem}[subsection]{Theorem}
\newtheorem{lemma}[subsection]{Lemma}
\newtheorem{proposition}[subsection]{Proposition}
\newtheorem{corollary}[subsection]{Corollary}
\newtheorem{remark}[subsection]{Remark}
%\newtheorem{algorithm}[subsection]{Algorithm}

%\theoremstyle{definition}

%\newtheorem{definition}[subsection]{Definition}
%\newtheorem{definitions}[subsection]{Definitions}

%\theoremstyle{remark}

%\newtheorem{remark}[subsection]{Remark}
%\newtheorem{remarks}[subsection]{Remarks}
%\newtheorem{example}[subsection]{Example}
%\newtheorem{paragr}[subsection]{}
%\newtheorem{notation}[subsection]{Notation}
%\newtheorem{notations}[subsection]{Notations}
%\newtheorem{questions}[subsection]{Questions}
%%%%%%%%%%%%%%%%%%%%%%%%%%
%\newcommand{\F}{{\bf F}}
\newcommand{\C}{{\mathbb C}}
\newcommand{\Z}{{\mathbb Z}}
\newcommand{\N}{\mathbb{N}}

\newcommand{\SL}{{\text{SL}}}
%\newcommand{\field}{{\bf{F}}}
%\newcommand{\fieldvg}{{\field}[V]^G}
%\newcommand{\fieldv}{{\field}[V]}

%%%%%%%%%%%%%%%%%%%%%%%%%%%%%%%%%%%%
%\newcommand{\tr}{\mathop{\rm Tr}}
%\DeclareMathOperator{\Span}{\rm span}
%\DeclareMathOperator{\gr}{\rm gr}
%\newcommand{\Norm}{\mathop{\rm N}}
%\newcommand{\LT}{\mathop{\rm LT}}
%\newcommand{\Deg}{\mathop{\deg_*}}
\newcommand{\LM}{\mathop{\rm LM}}

\newcommand{\Pol}{{\mathcal P}}
\newcommand{\Res}{{\mathcal R}}
\newcommand{\Sml}{S_{\rm L}}

\newcommand{\B}[1]{{\mathcal{B}_{#1}}}
\newcommand{\Path}[1]{{\text {Path}_{#1}}}
\newcommand{\ba}{\overline{\alpha}}
\renewcommand\succ{{\mathop{\rm succ}}}

\newcommand{\supp}{\mathop{\rm supp}}
\newcommand{\G}{{\mathcal G}}
\newcommand{\lot}{{\text{ l.o.t.}}}

\title[Weitzenb\"ock derivations of nilpotency 3]{Weitzenb\"ock derivations of nilpotency 3}

\author[Wehlau]{David L. Wehlau}
\address{Department of Mathematics and Computer Science \\ \hfil\break\indent
        Royal Military College \\ King\-ston, Ontario, Canada \\ K7K 5L0
             }
\email{wehlau@rmc.ca}

\date{\today}
%21 April 2010
\subjclass[2010]{13N15; 13A50; 13P10; 14E07}

\keywords{Locally nilpotent derivations; algebra of constants; invariants of unitriangular transformations, Robert's isomorphism,
                  polarization, restitution, SAGBI bases}

\dedicatory{}

%\thanks{This research is supported in part by the Natural Sciences and
%Engineering Research Council of Canada. The symbolic computation
%language MAGMA (http://magma.maths.usyd.edu.au/) was very helpful.}

\begin{document}

\begin{abstract}  %NOTE REFERENCES REMOVED FROM ABSTRACT
 We consider a Weitzenb\"ock derivation $\Delta$ acting on a polynomial ring $R=K[\xi_1,\xi_2,\dots,\xi_m]$ over a field
   $K$ of characteristic 0.
 The  $K$-algebra $R^\Delta = \{h \in R \mid \Delta(h) = 0\}$ is called the algebra of constants.
 Nowicki considered the case where the Jordan matrix for $\Delta$ acting on $R_1$, the degree 1 component of
 $R$, has only Jordan blocks of size 2.  He conjectured %~\cite{N}
  that a certain set generates $R^{\Delta}$ in that case.
 Recently Khoury, % ~\cite{Kh}, 
 Drensky and Makar-Limanov %~\cite{DM} 
 and Kuroda %~\cite{K}
  have given proofs of Nowicki's conjecture.
 Here we consider the case where the Jordan matrix for $\Delta$ acting on $R_1$ has only Jordan blocks of size at most
 3.   We use combinatorial methods to give a minimal set of generators $\mathcal G$ for the algebra of constants
 $R^{\Delta}$. Moreover, we show how our proof  yields an algorithm to express any $h \in R^\Delta$ as a polynomial
 in the elements of $\mathcal G$.   In particular, our solution shows how the classical techniques of polarization and restitution may be used to augment the techniques of SAGBI bases to construct generating sets for subalgebras.
 \end{abstract}

\maketitle
\tableofcontents

\section{Introduction}\label{intro}

    Let $K$ be a field of characteristic zero and let $R=K[\xi_1,\xi_2,\dots,\xi_m]$ be a polynomial ring over $K$ in $m$ variables each of degree 1.
    The ring $R$ has a standard $\N$-grading $R = \oplus_{d=0}^\infty R_d$ where $R_1 = \oplus_{i=1}^m K\xi_i$.
   % A $K$-linear operator $\Delta:R \to R$ is called a derivation if $\Delta(ab) = a\Delta(b) + \Delta(a)b$.
    A derivation $\Delta:R \to R$ is called {\it locally nilpotent} if, for every $a \in R$ there exists a positive integer $k$ such that $\Delta^k(a)=0$.
    Note that any locally nilpotent derivation of $R$ is a $K$-derivation.
    A derivation whose matrix representation on $R_1$ is a Jordan matrix with the zeros on the main diagonal is called a Weitzenb\"ock derivation.
    If $\Delta$ is a locally nilpotent derivation which restricts to $R_1$ then by an appropriate choice of basis we may suppose that
    $\Delta$ is a Weitzenb\"ock derivation.
    The kernel of $\Delta$ is a subalgebra of $R$ called the {\it algebra of constants} and denoted by $R^\Delta$.
    Weitzenb\"ock's Theorem~\cite{We}
    asserts that if $\Delta$ is a Weitzenb\"ock derivation then $R^\Delta$ is a finitely generated $K$-algebra.

    Recently the case where the Jordan matrix of $\Delta$ on $R_1$ consists consist of entirely $2\times 2$ blocks has been studied.
    Nowicki~\cite{N} conjectured that for this case $R^\Delta$ is generated by certain linear and quadratic polynomials.  This was proved by
    Khoury~\cite{Kh}, by Drensky and Makar-Limanov~\cite{DM} and also Kuroda~\cite{K}.
        Here we consider the case where the Jordan matrix of $\Delta$ on
    $R_1$ has blocks of size at most 3 and exhibit a set of generators for $R^\Delta$ in that case.  Furthermore, we give an algorithm
    for expressing any element of $R^\Delta$ as a polynomial in those generators.  
    
    Finding a finite SAGBI basis for the algebra of constants would provide an algorithm for expressing elements of  $R^\Delta$ as a polynomial in
    such a SAGBI basis.  We refer the reader to \cite[\S5.1]{CW} for a detailed discussion of SAGBI bases.
    We do not however provide a finite SAGBI basis here and indeed we suspect that none exists.  Nevertheless we are able to 
    combine the classical techniques of polarization and restitution with SAGBI basis techniques to provide an algorithm.  These ideas would seem to 
    apply to a wide range of subalgebras.   
    
    Rather than studying the kernel of $\Delta$ we may shift perspective and consider
    $\sigma := e^\Delta = 1 + \Delta + \Delta^2/2! + \Delta^3/3! + \dots$.
    Then $\sigma$ acts invertibly on $R_1$ and $R$.  We consider the infinite cyclic group $G$ of algebra automorphisms of $R$
    generated by $\sigma$.  Then $R^\Delta = R^G$, the ring of $G$ invariants.  We may use
    Robert's isomorphism to show that $R^\Delta = R^G \cong S^{\SL_2(K)}$ for a certain polynomial ring $S$ on which $\SL_2(K)$ acts linearly.
     This allows us to use the classical invariant theory of  $\SL_2(\C)$ to derive properties of $R^\Delta$.   For a discussion of this approach
     see \cite{shank conj} or \cite{B}.  For a modern treatment of  Robert's isomorphism see  \cite[Ch.~15 \S1.3 Theorem~1]{P}  and \cite{BK}.

    Another way to proceed is to use $\Z[\xi_1,\xi_2,\dots,\xi_m]$ in the role of $R$ and then reduce modulo a prime $p$.
    In this setting the cyclic group generated by $\sigma$ is $C_p$, the cyclic group of order $p$ and we study
    its ring of invariants $\Z/p\Z[\xi_1,\xi_2,\dots,\xi_r]^{C_p}$.  From this perspective we can compute
    $$\Z[\xi_1,\xi_2,\dots,\xi_r]^\Delta = \varprojlim %\varinjlim
     \Z/p\Z[\xi_1,\xi_2,\dots,\xi_r]_{<p}^{C_p}\ .$$
     See \cite{shank conj} for a discussion of, and examples of this approach.

    Here we use a simple combinatorial  method.  This is a generalization of the method used in \cite{CSW}
    where we considered a question related to Weitzenb\"ock derivations with Jordan blocks of size 2.

  \section{Main Theorem}
  \begin{theorem}[Main Theorem]\label{main theorem}
    Suppose the Weitzenb\"ock derivation acts on on a polynomial ring
    $R$ via a Jordan matrix on $R_1$ consisting entirely of Jordan blocks of size 3.
    Write $$R = K[x_1,y_1,z_1,x_2,y_2,z_2,\dots,x_n,y_n,z_n]$$ and
      write
      $$\Delta = \sum_{i=1}^{n} \left( x_{i}\frac{\partial}{\partial y_i} + y_{i}\frac{\partial}{\partial z_i}\right)\ .$$
    Then the algebra of constants $R^\Delta$
     is minimally generated as a  $K$-algebra by the following elements:
    \begin{enumerate}
      \item $f_{(i)} := x_i$ where $1 \leq i \leq n$;
      \item $f_{(i,j)} := x_i y_j - x_j y_i$ where $1 \leq i < j \leq n$;
      \item $g_{(i,j)} := x_i z_j - y_i y_j + z_i x_j$ where $1 \leq i \leq j \leq n$;
      \item $g_{(i,j,k)} = \det \left(
          \begin{matrix}
              x_i & y_i & z_i\\
              x_j & y_j & z_j\\
              x_k & y_k & z_k
          \end{matrix}       \right)$ where $1 \leq i < j < k \leq n$.
   \end{enumerate}
  \end{theorem}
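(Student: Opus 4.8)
The plan is to prove generation by combining leading-term (SAGBI-style) arguments with the classical operations of polarization and restitution: polarization reduces the problem to constants that are \emph{multilinear} in the triples $(x_i,y_i,z_i)$, and for multilinear constants a term-order analysis succeeds, whereas for general constants it does not (consistent with the expectation that no finite SAGBI basis exists). Before that one checks the easy direction: each listed element lies in $R^\Delta$. For $f_{(i)}=x_i$ this is immediate; for the other three families it is a one-line computation from $\Delta(z_i)=y_i$, $\Delta(y_i)=x_i$, $\Delta(x_i)=0$ (alternatively, $g_{(i,j,k)}$ is, up to scalar, the classical joint invariant of three binary quadratics, hence annihilated by $\Delta$).

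For the reduction to the multilinear case, note that $\Delta$ preserves the multidegree in the blocks $(x_i,y_i,z_i)$, so $R^\Delta$ is multigraded and it suffices to treat a multihomogeneous constant $h$ of multidegree $(d_1,\dots,d_n)$. Split the $i$th block into $d_i$ fresh blocks, extend $\Delta$ to the larger ring $\widetilde R$ in the obvious way, and let $P(h)\in\widetilde R$ be the full polarization of $h$. Since polarization commutes with $\Delta$, $P(h)$ is a constant, and it is multilinear in the $D=d_1+\dots+d_n$ blocks. Granting the multilinear case, $P(h)$ is a polynomial in the corresponding generators of $\widetilde R^{\Delta}$; now apply restitution (identify the $d_i$ copies of block $i$). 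Restitution is a ring homomorphism, its composite with $P$ is a nonzero scalar on the multidegree-$(d_1,\dots,d_n)$ component (characteristic zero), and it carries each generator of $\widetilde R$ either to $0$ (a bracket or determinant with a repeated block) or, up to sign, to one of the generators in $\G$ --- in particular $g_{(i,i)}$ arises precisely as the restitution of $g_{(i',j')}$ for two copies $i',j'$ of block $i$. Hence $h\in K[\G]$.

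The heart of the proof is the multilinear case. Fix a graded lexicographic order with $z_1>\dots>z_N>y_1>\dots>y_N>x_1>\dots>x_N$ and let $h$ be a multilinear constant in $N$ blocks; decomposing into weight-homogeneous parts we may assume every monomial of $h$ has the same weight, which equals $2\,\#\{x\text{'s}\}-2\,\#\{z\text{'s}\}\ge 0$. The leading monomials of the multilinear generators are $x_j$; $x_j y_i$ with $i<j$; $z_i x_j$ with $i<j$; and $x_k y_j z_i$ with $i<j<k$ --- in every case a smaller block index carries the heavier variable. The key combinatorial lemma is that the leading monomial of any multilinear constant is \emph{factorizable}: it equals the leading monomial of a product of multilinear generators whose supports partition $\{1,\dots,N\}$. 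This is proved by examining which terms of $\Delta(h)$ can cancel the image of the leading monomial, which forces $h$ to contain further monomials arranged in a pattern that exhibits such a factorization; this is the combinatorial engine, generalizing the method of \cite{CSW}. Granting the lemma, subtract from $h$ the appropriate scalar multiple of the corresponding product of generators; the result is again a multilinear constant with strictly smaller leading monomial, so Noetherian induction on the term order finishes the multilinear case, and with it generation.

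It remains to prove minimality. Each generator is multihomogeneous, so for its multidegree there are only finitely many products of the remaining generators, and a direct comparison of monomials shows the generator is not in their span: for example $g_{(i,j,k)}$, of multidegree $(1,1,1)$ in blocks $i,j,k$, contains monomials using all three of $x,y,z$ across the three distinct blocks, which none of $f_{(i)}f_{(j)}f_{(k)}$, $f_{(\cdot,\cdot)}f_{(\cdot)}$, or $g_{(\cdot,\cdot)}f_{(\cdot)}$ does; similarly $g_{(i,j)}$ and $g_{(i,i)}$ involve $y$- and $z$-terms absent from the relevant products of lower generators, and $f_{(i,j)}$, $f_{(i)}$ are handled the same way. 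I expect the main obstacle to be the factorizability lemma: choosing the term order so that the generator leading monomials tile correctly, and then running the $\Delta(h)=0$ cancellation analysis carefully enough to extract a valid partition. By contrast the polarization/restitution reduction, though it is the conceptually new ingredient, becomes routine once the commutation of polarization with $\Delta$ and the behaviour of restitution on the generators are recorded.
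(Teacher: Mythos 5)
Your overall architecture is the same as the paper's (check the easy direction, polarize a homogeneous constant to a multilinear one, run a leading-term subduction in the multilinear setting, then restitute, noting that restitution kills brackets with repeated blocks and produces $g_{(i,i)}$ from two copies of block $i$), and your treatment of the easy direction, of the polarization/restitution reduction, and of minimality is fine. The gap is exactly the step you flagged and left unproved: the ``factorizability lemma'' is false as stated. Every one of your multilinear generators has a leading monomial containing exactly one $x$-variable and at most three variables, so a product of such leading monomials over a partition of $\{1,\dots,N\}$ has as many $x$'s as factors. Already for $N=3$ the space of multilinear constants has dimension $7$ (the number of Jordan blocks in $J_3(1)^{\otimes 3}$, by the classical Kronecker-product decomposition), hence exactly $7$ distinct leading monomials under your order; but only $6$ multilinear monomials on the blocks $1,2,3$ are products of generator leading monomials, namely $x_1x_2x_3$, $y_1x_2x_3$, $z_1x_2x_3$, $x_1y_2x_3$, $x_1z_2x_3$, $z_1y_2x_3$. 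So some multilinear constant (for instance a suitable combination with leading monomial $y_1y_2x_3$; compare the element $X_1Y_2Y_3-Y_1X_2Y_3+Z_1X_2X_3-X_1Z_2X_3$) has a leading monomial that no product of generators can match, and your subduction stalls. This is precisely the phenomenon behind the expectation that no finite SAGBI basis exists: the leading terms of the generators do not generate the algebra of leading terms, so no cancellation analysis of $\Delta(h)=0$ can rescue the lemma in the form you stated it.

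The paper's proof supplies the missing ingredient differently. It introduces two recursively defined families of multilinear constants $F_I$ and $G_I$ for index sets $I$ of arbitrary size (e.g.\ $F_{\{i_1,\dots,i_t\}}=F_{\{i_2,i_4,\dots,i_t\}}G_{\{i_1,i_3\}}-F_{\{i_1,i_4,\dots,i_t\}}G_{\{i_2,i_3\}}$), computes their leading terms ($X_{i_1}Y_{i_2}\cdots Y_{i_t}$ and $X_{i_1}Y_{i_2}\cdots Y_{i_{t-1}}Z_{i_t}$, exactly the non-factorizable patterns above), and shows that products of these elements realize all ``path monomials'' of a rooted tree encoding the Jordan decomposition of $\otimes^d J_3(1)$. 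A counting argument (paths of length $d$ versus Jordan blocks, $\mu^d(k)=\nu^d(k)$) then shows these products form a vector-space basis of the multilinear constants, so the subduction terminates; no analysis of cancellations in $\Delta(h)=0$ is needed. Generation by $\mathcal{G}$ then follows because $\Res(F_I)$ and $\Res(G_I)$ lie in the algebra generated by $\mathcal{G}$ by induction on the same recursion --- not because each basis factor restitutes to a single generator, as your sketch assumes. So to complete your proof you would need both the enlarged families (with their leading-term computation) and a completeness argument such as the paper's dimension count; without them the central step fails.
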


%  \begin{remark}
%    If $n=2$ we only get six generators $f_{(1)}=x_1$, $f_{(2)}=x_2$,  $f_{(1,2)}=x_1y_2-x_2y_1$, $g_{(1,1)}$, $g_{(1,2)}$ and $g_{(2,2)}$.
%    If $n=1$ we only get two generator $f_{(1)}=x_1$ and $g_{(1,1)}.
%  \end{remark}

  \begin{remark}
    Note that if the Weitzenb\"ock derivation $\Delta$ acts on a polynomial ring $P$ via a Jordan matrix on $P_1$
    consisting of blocks of size at most 3 then we have a surjective algebra homomorphism
    $\psi:R \to P$ which commutes with the action of $\Delta$.   Then $\psi:R^{\Delta} \to P^\Delta$ is also surjective
    and $\Psi(\G)$ forms a generating set for $P^\Delta$.
  \end{remark}

%In addition to the usual polynomial degree we introduce an $\N^n$ multi-grading on
%$R=K[x_1,x_2,\dots,x_{n},y_1,y_2,\dots,y_{n},z_1,z_2,\dots,z_{n}]$ by  declaring that the
%multi-degree of the monomial $x_1^{a_1} y_1^{b_1}z_1^{c_1} x_2^{a_2} \cdots z_n^{c_n}$ is
%$(a_1+b_1+c_1,a_2+b_2+c_2,\dots,a_n+b_n+c_n)$.   We say a polynomial $h$ is {\em multi-linear}
%if it is homogeneous of degree $(d_1,d_2,\dots,d_n)$ where each $d_i \leq 1$.

       We will work with monomial orders.  For a discussion of lead monomials and monomial
  orders we refer the reader to \cite[Ch.~2]{CLO}.   All tensor products are over the base field $K$.

     Let $\G$ denote the set of elements (1)--(4) listed in Theorem~\ref{main theorem}.
      It is easy to verify that each of these elements is annihilated by $\Delta$.
      Furthermore, by considering degrees it is easy to show that the
      elements of $\G$ minimally generate some $K$-algebra $Q$.
      We begin by sketching the main steps in our proof that $Q=R^\Delta$.

      Suppose that $h\in R^\Delta$ is a homogeneous polynomial of degree $d$.
      We consider the polynomial ring
      $$S = K[X_1,Y_1,Z_1,X_2,Y_2,Z_2,\dots,X_d,Y_d,Z_d].$$
      By abuse of notation we consider $\Delta$ to be a Weitzenb\"ock derivation on $S$
      via $\Delta(Z_i)=Y_i$, $\Delta(Y_i)=X_i$ and $\Delta(X_i)=0$ for $i=1,2,\dots,d$.

             We will use the classical techniques of polarization and restitution.
      In the next section we briefly describe these two techniques.
        For a detailed discussion in a general setting, we refer the reader to the excellent book of
        Procesi~(\cite[Ch.~3 \S 2]{P}).  
        We denote by $\Sml$ the $K$-subspace of $S$ spanned by the monomials of the form
        $\eta_1\eta_2\cdots \eta_d$ where $\eta_j \in \{X_j,Y_j,Z_j\}$ for all $1 \leq j \leq d$.
        %A monomial of this form is called {\it multi-linear}. 
        Polarization is a $K$-linear operator
        $\Pol:R_d \to \Sml$ and
        restitution gives a $K$-linear operator $\Res: \Sml \to R_d$.
        Since both polarization and restitution commute with $\Delta$ we see that
        $\Pol:R_d^\Delta \to \Sml^\Delta = \Sml \cap S^\Delta$ and
        $\Res: \Sml^\Delta \to R_d^\Delta$.

      The full polarization of $h \in R_d^\Delta$ is $\Pol(h) \in \Sml^\Delta$.
      We find an explicit basis $\B{d}$ for $\Sml^\Delta$ as a $K$-vector space and so may write
      $H = \sum_{E \in \B{d}} c_E E$ for scalars $c_E \in K$.
      Then restituting $H$ yields $h = \sum_{E \in \B{d}} c_E \Res(E)$.
      The theorem then follows from the fact that each $\Res(E)$ may be expressed as a polynomial
       in the elements of $\G$.
      Since we may give these polynomial expressions explicitly and since we have an algorithm to
      compute the scalars $c_E$ we get an algorithm for expressing
      $h$ as a polynomial in the elements of $\G$.

      The main difficultly in the proof as outlined above is to find the basis
      $\B{d}$ of $\Sml^\Delta$.  We will construct a directed graph, in fact a rooted tree,
       $\Gamma$ and consider the set of paths $\Path{d}$ of length $d$ starting from the root.
       Naturally associated to each such path $\gamma$ we have a %multi-linear 
       monomial $\Lambda(\gamma) \in S$.
       We will construct a set map $\theta:\Path{d} \to \Sml^\Delta$ such that
        $\LM(\theta(\gamma))=\Lambda(\gamma)$ where $\LM$ denotes the leading monomial.
        Then showing that
       $\dim \Sml^\Delta = \# \Path{d}$ proves that $\theta(\Path{d})$ is a basis of
       $\Sml^\Delta$.

%   Let $n \geq 3$.  Suppose the Jordan matrix of $\Delta$ on $R_1$ consists of $n$ Jordan blocks, each of size $3$.
%    For ease of notation, we denote the polynomial algebra and its $3n$ variables by
%    $$R  = K[x_1,x_2,\dots,x_{n},y_1,y_2,\dots,y_{n},z_1,z_2,\dots,z_{n}]$$
%   where $$\Delta = \sum_{i=1}^{n} \left( x_{i}\frac{\partial}{\partial y_i} + y_{i}\frac{\partial}{\partial z_i}\right)\ .$$

    \section{Polarization and Restitution}
      Here we give a brief description of the classical techniques of polarization and restitution.
        For a complete discussion in a general setting, see Procesi \cite[Ch.~3 \S2]{P}.
        Let $R = K[\eta_1,\eta_2,\dots,\eta_n]$ be a polynomial ring on $n$ variables with the standard $\N$-grading $R=\oplus_{d=1}^{\infty} R_d$.
        Given $d \geq 1$, let $M=(\eta_{i,j})_{1\leq i \leq n, 1\leq j \leq d}$ be a matrix of indeterminants and let
        $v = (v_1,v_2,\dots,v_d)^T$ be a column of indeterminants.  Given $f \in R_d$, we view 
        $f(Mv)$ as a polynomial with coefficients in the ring $S=K[\eta_{i,j} \mid 1\leq i \leq n, 1\leq j \leq d]$. 
        Note that the ring $S$  varies with the value of $d$.    
        We say that a monomial in $S$ is {\it multi-linear} if it not divisible by any quadratic monomial of the form $\eta_{i_1,j} \eta_{i_2,j}$.  
        A polynomial $H\in S$ {\it muti-linear} if it is a linear combination of muti-linear monomials.
        
        The {\it (full) polarization}
        $\Pol(f) \in S$ of $f$ is the coefficient of the monomial $v_1v_2\cdots v_d$ in $f(Mv)$.  
        In fact, $\Pol(f) \in \Sml$, the space of degree $d$ multi-linear polynomials in $S$, 
        i.e.,  every monomial of $\Pol(f)$ is divisible by exactly one of the indeterminants
        from each set $\{\eta_{i,j} \mid 1 \leq i \leq n\}$ for every $j$ with $1 \leq j \leq d$.
        Hence $\Sml$ is the $K$-vector space span of $\{\eta_{i_1,1} \eta_{i_2,2} \cdots \eta_{i_d,d}\}$. 
        We also consider the {\it restitution} map.  This is the algebra homomorphism
        $\Res : S \to R$ determined by $\Res(\eta_{i,j}) = \eta_i/d!$.  
         Thus we have a full polarization operator and a restitution homomorphism for each component $R_d$ of $R$.
         %Moreover for  $f \in R_d$ we have $\Res(\Pol(f))=f$.

        For our purposes, we take $n=3m$ and 
        $$R=K[x_1,y_1,z_1,x_2,y_2,z_2,\dots,x_m,y_m,z_m]$$ and
        $$S = K[X_1,Y_1,Z_1,X_2,Y_2,Z_2,\dots,Z_{md}].$$
        Writing $i=3k+r$ with $r \in{1,2,3}$ we relabel the $\eta_i$ and the $\eta_{i,j}$ via\\
         $$\eta_i = \begin{cases}  
                               x_{k+1}, &\text{if }r=1;\\
                               y_{k+1}, &\text{if }r=2;\\
                               z_{k+1}, &\text{if }r=3;
                        \end{cases} \text{ and }
         \eta_{i,j} = \begin{cases}  
                               X_{kd+j}, &\text{if }r=1;\\
                               Y_{kd+j}, &\text{if }r=2;\\
                               Z_{kd+j}, &\text{if }r=3.
                          \end{cases}$$
                        
      %  If $f(x_1,y_1,z_1,x_2,y_2,z_2,\dots,x_n,y_n,z_n)\in R$ is a homogeneous polynomial of degree $d$
        %then its full polarization $\Pol(f) \in \Sml$ where    
             By abuse of notation we consider $\Delta$ to be a Weitzenb\"ock derivation on $S$
      by declaring that $\Delta(Z_i)=Y_i$, $\Delta(Y_i)=X_i$ and $\Delta(X_i)=0$ for $i=1,2,\dots,d$.

%      Let $D_{i,j} $ denote the differential operator on $T$ defined by 
%      $$D_{i,j} :=  X_{r_i+j} \frac{\partial}{\partial x_i} + Y_{r_i+j} \frac{\partial}{\partial y_i} + Z_{r_i+j} \frac{\partial}{\partial z_i}$$
%      for $1 \leq i \leq n$ and $1 \leq j \leq d_i$ where $r_i = d_1 + d_2 + \dots + d_{i-1}$.
%      Then the full polarization of $f$ is the element
%      $$\Pol(f) := \frac{1}{d_1! d_2! \cdots d_n!} \prod_{i=1}^n \prod_{j=1}^{d_i} D_{i,j} f\ .$$
%      The full polarization $\Pol(f)$ is multi-linear, i.e., it lies in $S_{(1,1,\dots,1)}$.
%      Note that
%      $$S_{(1,1,\dots,1)} \cong K[X_1,Y_1,Z_1]_1
%             %\otimes K[X_2,Y_2,Z_2]_1
%                    \otimes \dots \otimes K[X_d,Y_d,Z_d]_1 \cong V_3 \otimes V_3 \otimes \dots \otimes V_3\ .$$

        %We also consider the restitution map.  This is the algebra homomorphism
        In this notation the restitution map $\Res: S \to R$ is determined by $\Res(X_k)=x_\ell/d!$, $\Res(Y_k)=y_\ell/d!$ and $\Res(Z_k)=z_\ell/d!$
        where $\ell=\lceil k/d \rceil$.  
        With this notation a monomial in $S$ is multi-linear if the indeterminants dividing it have distinct subscripts.
     
          The following theorem summarizes the properties of polarization and restitution we will use.
             \begin{theorem}\
           \begin{enumerate}
             \item $\Pol : R_d \to \Sml$ is a $K$-linear operator.       \label{one}
              \item $\Res : S \to R$ is an algebra homomorphism.     \label{two}
              \item $\Res(\Pol(f))=f$ for all $f \in R_d$.                       \label{three}
              \item $\Pol$ and $\Res$ commute with $\sigma$.         \label{four}
              \item $\Pol$ and $\Res$ commute with $\Delta$.           \label{five}
              \item $\Pol : R_d^\Delta \to \Sml^\Delta$.                       \label{six}
              \item $\Res : \Sml^\Delta \to R_d^\Delta$.                     \label{seven}
             \end{enumerate}
        \end{theorem}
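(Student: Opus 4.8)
\textit{Proof plan.} Parts (1) and (2) are immediate from the definitions. The assignment $f\mapsto f(Mv)$ is $K$-linear and so is extraction of the coefficient of $v_1v_2\cdots v_d$, hence $\Pol$ is $K$-linear; moreover $\Pol(f)\in\Sml$ because, each $\eta_i$ being replaced by the $v$-linear form $(Mv)_i=\sum_j\eta_{i,j}v_j$ and $f$ being homogeneous of degree $d$, the polynomial $f(Mv)$ is homogeneous of degree $d$ in $v_1,\dots,v_d$, so its coefficient of $v_1\cdots v_d$ is precisely its part that is multilinear in the $v_j$, and this forces every surviving monomial to involve exactly one indeterminate with each second index $1,\dots,d$. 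Part (2) holds because $\Res$ is defined to be the $K$-algebra homomorphism extending $\eta_{i,j}\mapsto\eta_i/d!$, which exists and is unique by the universal property of the polynomial ring $S$.

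For (3) I would substitute $\eta_{i,j}\mapsto\eta_i/d!$ first and extract the $v$-coefficient afterwards; since the substitution does not involve the $v_j$ the two operations commute. After the substitution each $(Mv)_i$ collapses to $(\eta_i/d!)(v_1+\cdots+v_d)$, so by homogeneity $f(Mv)$ becomes $(d!)^{-d}(v_1+\cdots+v_d)^d\,f(\eta_1,\dots,\eta_n)$. Since the coefficient of $v_1v_2\cdots v_d$ in $(v_1+\cdots+v_d)^d$ is the multinomial coefficient $d!$, the normalization built into $\Res$ is exactly what cancels it and yields $\Res(\Pol(f))=f$.

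The substantive parts are (5), from which (4) follows formally. Let $\phi\colon R\to S[v]$ be the algebra homomorphism $\eta_i\mapsto(Mv)_i$ used to define $\Pol$, and extend $\Delta$ to $S[v]$ by declaring that it fixes each $v_j$. Both $\Delta_{S[v]}\circ\phi$ and $\phi\circ\Delta_R$ are derivations of $R$ into the $R$-module $S[v]$ (scalars acting through $\phi$), so they coincide once they agree on the generators $\eta_i$ of $R$. That agreement is exactly the statement that $\phi$ carries the size-$3$ Jordan block structure of $\Delta$ on $R$ to that of $\Delta$ on $S$ block by block: for the $k$th block, $\phi(x_{k+1})=\sum_j X_{kd+j}v_j$, $\phi(y_{k+1})=\sum_j Y_{kd+j}v_j$, $\phi(z_{k+1})=\sum_j Z_{kd+j}v_j$, whence $\Delta_{S[v]}(\phi(z_{k+1}))=\sum_j Y_{kd+j}v_j=\phi(y_{k+1})=\phi(\Delta_R(z_{k+1}))$, and likewise $\Delta_{S[v]}(\phi(y_{k+1}))=\phi(\Delta_R(y_{k+1}))$ and $\Delta_{S[v]}(\phi(x_{k+1}))=0=\phi(\Delta_R(x_{k+1}))$. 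Because $\Delta$ fixes $v_1\cdots v_d$, extracting its coefficient commutes with $\Delta$, and composing gives $\Pol\circ\Delta_R=\Delta_S\circ\Pol$ on $R_d$. The argument for $\Res$ is identical with $\Res$ now playing the role of $\phi$: $\Delta_R\circ\Res$ and $\Res\circ\Delta_S$ are derivations of $S$ into $R$ (scalars through $\Res$) that agree on the generators $X_k,Y_k,Z_k$ by a one-line check, e.g.\ $\Delta_R\Res(Z_k)=\Delta_R(z_\ell/d!)=y_\ell/d!=\Res(Y_k)=\Res(\Delta_S Z_k)$, so they are equal. For (4), $\sigma=e^\Delta$ acts on any element as the finite sum $\sum_k\Delta^k/k!$ by local nilpotency of $\Delta$ on $R$ and on $S$, and $\Pol$, $\Res$, being $K$-linear and commuting with $\Delta$, commute with this sum termwise.

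Finally, (6) and (7) drop out of the earlier parts. If $\Delta f=0$ then $\Delta(\Pol f)=\Pol(\Delta f)=0$ by (5) and $\Pol f\in\Sml$ by (1), so $\Pol f\in\Sml\cap S^\Delta=\Sml^\Delta$. If $H\in\Sml^\Delta$ then $\Res H\in R_d$ — a multilinear degree-$d$ monomial of $S$ maps under $\Res$ to $(d!)^{-d}$ times a product of $d$ of the $\eta_i$ — and $\Delta(\Res H)=\Res(\Delta H)=0$, so $\Res H\in R_d^\Delta$. I do not expect any real obstacle here: the content is bookkeeping together with one short multinomial identity, and the only point that genuinely needs care is arranging the relabeling of the doubly-indexed variables so that the Jordan blocks on $R$ and on $S$ match up correctly in the generator-level check inside (5).
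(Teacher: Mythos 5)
Your proposal is correct in overall structure but reaches the theorem by a genuinely different route than the paper. The paper disposes of (\ref{one})--(\ref{four}) by citing Procesi and then deduces (\ref{five}) from (\ref{four}) via $\Delta=\ln\sigma=\sum_{n\ge 1}\tfrac{(-1)^{n+1}}{n}(\sigma-1)^n$, a finite sum because $\sigma-1$ is nilpotent; (\ref{six}) and (\ref{seven}) are then immediate, exactly as in your last paragraph. You run the logic in the opposite direction: you prove (\ref{five}) directly, by noting that $\Delta\circ\phi$ and $\phi\circ\Delta$ (and likewise $\Delta\circ\Res$ and $\Res\circ\Delta$) are derivations along the algebra maps $\phi$ and $\Res$, hence equal once checked on the variables, and you then recover (\ref{four}) by exponentiating, using local nilpotency to make $e^{\Delta}$ a finite sum. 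This is the ``checked directly'' alternative the paper merely alludes to, carried out properly; it buys self-containedness (no appeal to the literature and no logarithm trick), at the cost of the block-by-block bookkeeping in the generator check, which you do handle correctly, including the relabelling $\phi(x_{k+1})=\sum_j X_{kd+j}v_j$, etc., and the fact that extracting the coefficient of $v_1\cdots v_d$ commutes with $\Delta$ since $\Delta$ kills the $v_j$.

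One line of your argument for (\ref{three}) does not close as written. Taking $\Res$ to be the algebra homomorphism with $\Res(\eta_{i,j})=\eta_i/d!$, your own computation gives $\Res\bigl(f(Mv)\bigr)=(d!)^{-d}(v_1+\cdots+v_d)^d\,f$, and the coefficient of $v_1\cdots v_d$ is then $d!\,(d!)^{-d}f=(d!)^{1-d}f$, not $f$: the single multinomial factor $d!$ cannot cancel $d$ copies of $1/d!$, so the claimed cancellation fails for $d\ge 2$. This is really a normalization wrinkle in the definition of $\Res$ as stated in the paper (restitution should divide by $d!$ exactly once on multilinear elements, whereas an algebra homomorphism sending every doubly indexed variable to $\eta_i/d!$ divides by $(d!)^d$ on $\Sml$); your computation exposes it rather than resolves it. The repair is harmless --- either define the restitution operator on $\Sml$ as substitution $\eta_{i,j}\mapsto\eta_i$ followed by a single division by $d!$, or accept a nonzero scalar multiple in (\ref{three}), which affects none of the later uses since membership in the algebra $Q$ and the kernel conditions in (\ref{six}), (\ref{seven}) are insensitive to scalars --- but your write-up should acknowledge it rather than assert the cancellation.
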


     \begin{proof}
        Proofs of statements (\ref{one})-(\ref{four}) may be found in \cite[Ch.3 \S2]{P}.
        Statement (\ref{five}) may be checked directly (using induction on degree) or we may use the fact 
        that polarization and restitution commute with $\sigma$.  Combining this with
        $\Delta = \ln(\sigma) = \ln (1+ (\sigma-1)) = \sum_{n=1}^\infty \frac{(-1)^{n+1}}{n} (\sigma-1)^n$ yields (\ref{five}).
        Note that this power series expansion for $\Delta$ is finite since $\sigma-1$ is nilpotent.     
        Statements (\ref{six}) and (\ref{seven}) follow immediately from statements (\ref{one}), (\ref{two}) and (\ref{five}).
     \end{proof}

  \section{Tensor Products of Jordan Matrices}
We seek to find the Jordan form for $\Delta$ on $\otimes^d V_3$ where $d$ is a positive integer.
Let $J_n(\lambda)$ denote the $n \times n$ Jordan matrix with a single Jordan block
and eigenvalue $\lambda$.

  The derivation $\Delta$ on $\Sml \cong \otimes^dV_3$ has Jordan decomposition
  $$\oplus_{k=1}^\infty \mu^d(k) J_k(0)$$ for some integers $\mu^d(k) \in \N\ .$
  Here we write $t J_k(0)$ to denote the direct sum of $t$ Jordan blocks of size $k$ and eigenvalue 0.
%  In the next section, we will express
%   the numbers $\mu^d(k)$ in terms of paths in a certain oriented graph.

 It is not hard to see that the action of $\sigma=e^\Delta$ on $\otimes^d V_3$ has Jordan matrix given
 by $\oplus_{k=1}^\infty \mu^d(k) J_k(1)$.  In particular, the matrix of $\sigma$ on $V_3$ has Jordan form $J_3(1)$.
 To determine the numbers $\mu^d(k)$ we work with $\sigma$ rather than with $\Delta$ directly.
%
%
%The eigenvectors associated to the Jordan blocks of $\Delta$ on
%$R_d$ form a basis for the kernel of $\Delta$ on this homogeneous component.
%These elements are precisely the eigenvectors fixed by $\sigma$ when it acts on
%$R_d$.  Indeed
%
% Since $\Delta = \sigma-1$, we need to determine the Jordan block decomposition of
%$\otimes^d J_3(1) - I_{3^m}$.
Hence we need to find the Jordan form for the Kronecker power $\otimes^d J_3(1)$.  To do this inductively
it suffices to decompose the Kronecker  product
$J_m(1) \otimes J_n(1)$ into a sum of Jordan blocks.

 The question of the Jordan decomposition of the Kronecker product of Jordan matrices
 was considered early in the last century.
The following theorem which provides the solution for the Jordan decomposition of $J_n(\lambda) \otimes J_m(\mu)$was enunciated
at that time (see \cite{A,L,R}).  However, it was not until rather later that a correct proof of this result \cite{Br,MR} was given.
For a discussion of the history of this problem see \cite{Br} or \cite{shank conj}.

     \begin{theorem}\label{Jordan product decomp}   Let $1 \leq m \leq n$.  Then
     $$J_m(1) \otimes J_n(1) = J_{n-m+1}(1) \oplus J_{n-m+3}(1) \oplus J_{n-m+5}(1) \oplus \dots \oplus J_{n+m-1}(1)\ .$$
  \end{theorem}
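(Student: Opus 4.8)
The plan is to work with $\sigma = J_m(1) \otimes J_n(1)$ as a unipotent operator and to determine its Jordan structure by computing the ranks of the powers of $N := \sigma - I$. Since $\sigma$ is unipotent, all eigenvalues are $1$, so the entire Jordan type is encoded in the sequence $r_k := \operatorname{rank}(N^k)$ for $k \geq 0$: the number of Jordan blocks of size exactly $j$ equals $r_{j-1} - 2r_j + r_{j+1}$. Thus the theorem is equivalent to the rank identities that one obtains by expanding the claimed right-hand side, and the whole proof reduces to a single computation of $\operatorname{rank}(N^k)$ for all $k$.

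First I would set up coordinates. Write $J_m(1) = I_m + E_m$ and $J_n(1) = I_n + E_n$ where $E_m, E_n$ are the nilpotent single-block shift matrices. Then
\[
  N = J_m(1)\otimes J_n(1) - I_{mn} = E_m\otimes I_n + I_m\otimes E_n + E_m\otimes E_n,
\]
and I would compare this with the "diagonal" nilpotent $D := E_m\otimes I_n + I_m\otimes E_n$, which is exactly the nilpotent part of $J_m(1)\oplus$-type tensor acting the way a sum of $sl_2$-weights does. A cleaner route: pass to the representation-theoretic picture. The matrix $J_k(1)$ is $\exp$ of a principal nilpotent, so $V_m := K^m$ with $E_m$ is (up to the exponential) the irreducible $sl_2$-module of highest weight $m-1$, and $J_m(1)\otimes J_n(1)$ corresponds to $V_m\otimes V_n$. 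The Clebsch--Gordan decomposition for $sl_2$ gives $V_m\otimes V_n \cong V_{n-m+1}\oplus V_{n-m+3}\oplus\cdots\oplus V_{n+m-1}$ (taking $m\le n$), and since the Jordan block $J_\ell(1)$ is precisely the exponential of a principal nilpotent on $V_\ell$, each summand $V_{n-m+1+2i}$ contributes one block $J_{n-m+1+2i}(1)$. This is really the proof; the only thing to be careful about is that we are in characteristic $0$, so Clebsch--Gordan applies verbatim, and that the exponential map carries a principal nilpotent Jordan block on $V_\ell$ to $J_\ell(1)$ (a direct check, since $\exp$ of a single nilpotent Jordan block is again unipotent with a single Jordan block of the same size).

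Alternatively, if one wants to avoid invoking representation theory as a black box, I would carry out the rank computation directly. The key lemma is that for the commuting nilpotents, $\operatorname{rank}(N^k) = \operatorname{rank}(D^k)$ for all $k$, because $N$ and $D$ have the same Jordan type — one can see this by conjugating $\sigma$ by a suitable block-triangular change of basis, or by checking that $N = gDg^{-1}$ for an explicit invertible $g$ built from the $sl_2$-action (the "nonabelian" term $E_m\otimes E_n$ is a higher-order correction that does not change the filtration degrees). Then $\operatorname{rank}(D^k)$ is a purely combinatorial count: $D$ acts on the basis $e_a\otimes e_b$ ($0\le a < m$, $0\le b < n$) by the weight-lowering rule, and $\operatorname{rank}(D^k) = \#\{(a,b) : a+b \ge k,\ (m-1-a)+(n-1-b)\ge k\}$ — essentially counting lattice points in a rectangle that survive $k$ steps of the shift. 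Summing, and comparing with the rank sequence of the claimed right-hand side (a disjoint union of single Jordan blocks of sizes $n-m+1, n-m+3, \ldots, n+m-1$, whose $k$-th rank is $\sum_i \max(0,\,(n-m+1+2i)-k)$), gives the identity after a short induction on $m$ or a direct binomial bookkeeping.

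The main obstacle is the step $\operatorname{rank}(N^k) = \operatorname{rank}(D^k)$, i.e.\ controlling the cross-term $E_m\otimes E_n$: one must verify that adding it does not merge or split Jordan blocks. I expect to handle this by exhibiting the intertwiner $g$ coming from the $sl_2$-module structure explicitly (so that $g D g^{-1} = N$), which simultaneously gives the block sizes and the change of basis realizing the decomposition, hence also the algorithmic content promised in the introduction. Since this theorem is classical (as the excerpt notes, a correct proof appears in \cite{Br,MR}), I would in the write-up simply cite it and give the $sl_2$/Clebsch--Gordan sketch above as motivation rather than reproving it in full.
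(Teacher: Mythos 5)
The paper does not actually prove this theorem: it treats it as a classical result and simply cites Brualdi \cite{Br} and Marcus--Robinson \cite{MR} for a correct proof (and \cite{A,L,R} for the original statements). So your closing decision --- cite the literature and give the $\mathfrak{sl}_2$/Clebsch--Gordan argument only as a sketch --- matches exactly what the paper does, and your primary sketch is a legitimate characteristic-zero proof that is in fact closer in spirit to the $\SL_2$ connection the paper invokes elsewhere (Robert's isomorphism). Your sketch can be tightened so that the ``main obstacle'' you identify disappears: since $\exp(E_m)-I=E_m(I+E_m/2+\cdots)$ with the second factor invertible, $J_m(1)$ is similar to $\exp(E_m)$, hence $J_m(1)\otimes J_n(1)$ is similar to $\exp(E_m)\otimes\exp(E_n)=\exp(D)$ with $D=E_m\otimes I_n+I_m\otimes E_n$ (the two summands commute). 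Then $\operatorname{rank}\bigl((\exp(D)-I)^k\bigr)=\operatorname{rank}(D^k)$ for all $k$ by the same invertible-factor observation, so no separate intertwiner $g$ taming the cross-term $E_m\otimes E_n$ is needed; Clebsch--Gordan applied to $D$ (a regular nilpotent of $\mathfrak{sl}_2$ acting on $V_m\otimes V_n$) finishes the argument. This buys an actual proof where the paper only has a citation, at the cost of assuming characteristic $0$ (which the paper does anyway), whereas \cite{Br} is combinatorial and characteristic-free.

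One concrete slip in your alternative ``elementary'' route: the lattice-point formula $\operatorname{rank}(D^k)=\#\{(a,b): a+b\ge k,\ (m-1-a)+(n-1-b)\ge k\}$ is wrong. For $m=n=2$, $k=2$ it gives $0$, but $D^2=2\,E_2\otimes E_2$ has rank $1$; the binomial cross terms in $D^k=\sum_j\binom{k}{j}E_m^j\otimes E_n^{k-j}$ mean the rank is not computed monomial-by-monomial on the basis $e_a\otimes e_b$. The correct values are $\operatorname{rank}(D^k)=\sum_i\max\bigl(0,(n-m+1+2i)-k\bigr)$, which is what you would be trying to prove, so this branch of the argument is circular unless the rank count is redone (e.g.\ as in \cite{Br}); better to drop it and rely on the exponential/Clebsch--Gordan route or the citation.
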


This yields the following.

\begin{lemma}\label{recursive mu}
 Suppose $k$ is an odd positive integer.  Then
  $$
      \mu^0(k) = %\delta_k^1, %\text{ and } \mu^1(k) = \delta_k^3
                         \begin{cases}
                               1,&\text{if }k=1;\\
                               0,&\text{if } k\neq 1,
                          \end{cases}
      $$
and
  $$
  \mu^{d+1}(k) =
       \begin{cases}
             \mu^d(3),                                              & \text{if } k=1;\\
             \mu^d(k-2) + \mu^d(k) + \mu^d(k+2),  & \text{if } 3 \leq k.\\
        \end{cases}
  $$
for $d \geq 1$.
\end{lemma}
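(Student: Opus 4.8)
The plan is to translate the Jordan decomposition of the Kronecker power $\otimes^d J_3(1)$ into a recursion, using Theorem~\ref{Jordan product decomp} with $m=3$. First I would record the base case $d=0$: the space $\otimes^0 V_3$ is one-dimensional (the ground field $K$), on which $\Delta=0$, so its Jordan form is $J_1(0)$; equivalently the $\sigma$-action is trivial with Jordan form $J_1(1)$. This gives $\mu^0(1)=1$ and $\mu^0(k)=0$ for $k\neq 1$, as claimed. Since the blocks of $\otimes^d J_3(1)$ all have odd size (this follows inductively from Theorem~\ref{Jordan product decomp}: tensoring an odd block $J_k(1)$ with $J_3(1)$ produces $J_{k-2}(1)\oplus J_k(1)\oplus J_{k+2}(1)$, again odd), it suffices to track $\mu^d(k)$ for odd $k$.

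For the inductive step I would write $\otimes^{d+1} J_3(1) = (\otimes^d J_3(1)) \otimes J_3(1) = \bigl(\bigoplus_{k \text{ odd}} \mu^d(k)\, J_k(1)\bigr) \otimes J_3(1)$, and distribute the Kronecker product over the direct sum. Applying Theorem~\ref{Jordan product decomp} to each summand $J_k(1)\otimes J_3(1)$: when $k\geq 3$ we may take $m=3\leq n=k$ in the theorem and obtain $J_{k-2}(1)\oplus J_k(1)\oplus J_{k+2}(1)$; when $k=1$ we instead have $m=1$, $n=3$, and the theorem gives simply $J_3(1)$. Collecting the contributions to each block size $J_\ell(1)$ on the right, a block of size $\ell\geq 3$ receives $\mu^d(\ell-2)$ copies (from $k=\ell-2$, provided $\ell-2\geq 1$, which always holds), $\mu^d(\ell)$ copies (from $k=\ell$, with $\ell\geq 3$), and $\mu^d(\ell+2)$ copies (from $k=\ell+2$). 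This yields $\mu^{d+1}(\ell)=\mu^d(\ell-2)+\mu^d(\ell)+\mu^d(\ell+2)$ for $\ell\geq 3$. A block of size $\ell=1$ receives copies only from $k=1$, via the degenerate case $J_1(1)\otimes J_3(1)=J_3(1)$ — which contributes nothing to size $1$ — and from $k=3$, via $J_3(1)\otimes J_3(1)=J_1(1)\oplus J_3(1)\oplus J_5(1)$, contributing $\mu^d(3)$ copies of $J_1(1)$. Hence $\mu^{d+1}(1)=\mu^d(3)$, as claimed.

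The one bookkeeping subtlety — and the closest thing to an obstacle — is making sure the boundary term at $\ell=1$ is handled correctly: the formula $\mu^d(\ell-2)+\mu^d(\ell)+\mu^d(\ell+2)$ cannot literally be used at $\ell=1$ since it would reference $\mu^d(-1)$ and, more importantly, the term $\mu^d(1)$ would spuriously count a contribution of $J_1(1)$ from $J_1(1)\otimes J_3(1)$, which the theorem tells us is actually $J_3(1)$, not $J_1(1)\oplus J_3(1)\oplus J_5(1)$. So the single-block identity $J_1(1)\otimes J_3(1)=J_3(1)$ is precisely what forces the separate case $k=1$ in the recursion, and this is the point that needs to be stated carefully rather than a genuinely hard step. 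Everything else is a routine assembly of the direct-sum decomposition.
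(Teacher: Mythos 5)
Your proposal is correct and follows exactly the route the paper intends: the paper presents the lemma as an immediate consequence of Theorem~\ref{Jordan product decomp} (applied to $(\otimes^d J_3(1))\otimes J_3(1)$, distributing over the direct sum), and your careful handling of the degenerate case $J_1(1)\otimes J_3(1)=J_3(1)$ is precisely the bookkeeping that justifies the separate $k=1$ clause. Nothing further is needed.
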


\section{The Representation Graph}

    In this section we introduce a directed graph $\Gamma$ which encodes
    the Jordan decomposition of tensor powers of $J_3(1)$.
    In order to simplify the exposition, we will consider $\Gamma$ as
    embedded in the $xy$-plane in the first quadrant.

    $\Gamma$ is tree with root at the point $(1,0)$.  The vertices of $\Gamma$
    are the integer lattice points $(k,d)$ in the first quadrant with $k$ odd and which lie
    on or above the line $y=x-1$, i.e., the points $(2a+1,d)$ with $a,d \in \Z$ and $0 \leq 2a \leq d$.
    We will also attach labels to the edges of $\Gamma$.
   Every vertex $(2a+1,d)$ of $\Gamma$ has a directed edge going up and to the right to
   the vertex at $(2a+3,d+1)$.  We label this edge with the symbol $X_{d+1}$.
   If $2a+1 \ge 3$ there is also a directed edge going straight up from $(2a+1,d)$ to $(2a+1,d+1)$.
   This vertical edge is labelled $Y_{d+1}$.  Finally , if $2a+1 \geq 3$ there is also an edge
   from $(2a+1,d)$ up and leftward to $(2a-1,d+1)$.  This edge is labelled $Z_{d+1}$.
   Note that the edge labels are not distinct.

   Consider a path in the directed graph from the root $(1,0)$ to a vertex $(2a+1,d)$.
   Reading the edge labels of this path yields $d$ labels, each from the set
   $\{X_1,Y_1,Z_1,X_2,Y_2,Z_2,\dots,X_d,Y_d,Z_d\}$.  Furthermore each of the subscripts
   $1,2,\dots,d$ occurs exactly once.   Multiplying these labels together yields a
    monomial of degree $d$ in $\Sml\cong\otimes^d V_3$.
      Thus to each path $\gamma$ of length $d$ originating from the root we have
    associated a monomial which we denote by $\Lambda(\gamma)$.
      We call the monomials
    which can be constructed in this manner, {\em path monomials} and we denote by $M_d$ the
    path monomials arising from paths of length $d$.
We will show that these path monomials are exactly the lead monomials of elements of
$\Sml^\Delta \cong (\otimes^d V_3)^\Delta$.

  We begin by counting paths in $\Gamma$.
 Let $\nu^d(k)$ denote the number of distinct paths in $\Gamma$ from the root $(1,0)$
 to the vertex $(k,d)$.
   With this notation we have the following lemma whose proof is left to the reader.

  \begin{lemma}  \label{recursive nu}
    $$\nu^0(k) = %\delta_k^1, %\text{ and } \nu^1(k) = \delta_k^3\
                          \begin{cases}
                               1,&\text{if }k=1;\\
                               0,&\text{if } k\neq 1,
                          \end{cases}
    $$
    and
    $$\nu^{d+1}(k) =
        \begin{cases}
               \nu^d(3),                                             & \text{if } k=1;\\
               \nu^d(k-2) +  \nu^d(k) + \nu^d(k+2),  & \text{if } 3 \leq k.\\
        \end{cases}$$  %for $d \geq 1$
  \end{lemma}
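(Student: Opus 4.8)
The plan is to derive both identities directly from the combinatorial description of the tree $\Gamma$, by examining how a path of length $d+1$ from the root factors through its penultimate vertex. I would adopt throughout the convention that $\nu^d(k)=0$ whenever $(k,d)$ fails to be a vertex of $\Gamma$ (e.g. when $k$ is even, $k\le 0$, or $k>d+1$); with this convention the case $k\ge 3$ of the recursion becomes uniform, and only the column sitting above the root requires separate treatment.

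First I would dispose of the base case $d=0$: the defining constraint $0\le 2a\le d$ forces $a=0$, so $(1,0)$ is the unique vertex at height $0$, and the empty path is its unique path of length $0$ from the root. This gives $\nu^0(1)=1$ and $\nu^0(k)=0$ otherwise.

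Next, for the recursion I would use the obvious bijection between paths of length $d+1$ from the root to $(k,d+1)$ and pairs consisting of a path of length $d$ from the root to some vertex $(k',d)$ together with an edge of $\Gamma$ from $(k',d)$ to $(k,d+1)$; this is immediate since $\Gamma$ is a graded directed graph in which every edge goes up exactly one level. Hence $\nu^{d+1}(k)=\sum_{k'}\nu^d(k')$, the sum taken over the predecessors $(k',d)$ of $(k,d+1)$. The remaining work is to read those predecessors off the three edge rules: the $X_{d+1}$-edge sends $(k',d)$ to $(k'+2,d+1)$; the $Y_{d+1}$-edge, present only when $k'\ge 3$, sends $(k',d)$ to $(k',d+1)$; and the $Z_{d+1}$-edge, present only when $k'\ge 3$, sends $(k',d)$ to $(k'-2,d+1)$. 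For $k\ge 3$ the three predecessors are therefore $(k-2,d)$, $(k,d)$, $(k+2,d)$, and the stated recursion follows (any of these that fails to be a vertex contributing $0$ by the convention). For $k=1$ no $X_{d+1}$-edge can land at $(1,d+1)$, since such an edge always raises the first coordinate to a value $\ge 3$, and the only candidate $Y_{d+1}$-source, namely $(1,d)$, emits no vertical edge because its first coordinate is $1<3$; so the sole predecessor of $(1,d+1)$ is $(3,d)$ via a $Z_{d+1}$-edge, giving $\nu^{d+1}(1)=\nu^d(3)$.

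I do not expect a genuine obstacle here; the statement is bookkeeping, which is presumably why the paper leaves it to the reader. The one subtlety worth flagging is the asymmetry in the column $k=1$: because the root-column vertices $(1,d)$ have first coordinate $1<3$, they carry no vertical edge, so no $\nu^d(1)$ term survives into $\nu^{d+1}(1)$, and this is precisely what produces the special first clause of the recursion.
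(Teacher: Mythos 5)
Your argument is correct and is exactly the bookkeeping the paper leaves to the reader: the base case is the empty path at the root, and the recursion follows from classifying paths to $(k,d+1)$ by their final edge, reading the possible predecessors $(k-2,d)$, $(k,d)$, $(k+2,d)$ (resp.\ only $(3,d)$ when $k=1$) off the three edge rules of $\Gamma$. One small correction to your vanishing convention: the vertex set of $\Gamma$ should be read as the points $(2a+1,d)$ with $0\le a\le d$, i.e.\ $k\le 2d+1$ (the paper's ``$0\le 2a\le d$'' is a slip), so the non-vertex condition is $k>2d+1$ rather than $k>d+1$ --- under the literal $k\le d+1$ reading even $(3,1)$ would fail to be a vertex and the recursion would already be violated at $\nu^{1}(3)$.
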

  %%%%%%%%%%%%%%%

The following corollary is immediate.
  \begin{corollary}\label{counting corollary}
  For all $d \in \N$  and all odd positive integers $k$ we have
    $$\mu^d(k) = \nu^d(k)\ .$$
  \end{corollary}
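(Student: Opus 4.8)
The plan is a straightforward induction on $d$. Lemmas~\ref{recursive mu} and~\ref{recursive nu} exhibit the two sequences $\bigl(\mu^d(k)\bigr)$ and $\bigl(\nu^d(k)\bigr)$, indexed over odd positive integers $k$, as solutions of one and the same recurrence: both have the same initial data at $d=0$, namely the value $1$ at $k=1$ and $0$ elsewhere, and both pass from level $d$ to level $d+1$ by the rules $u^{d+1}(1) = u^d(3)$ and $u^{d+1}(k) = u^d(k-2) + u^d(k) + u^d(k+2)$ for odd $k \geq 3$. So I would simply observe that $\mu^d$ and $\nu^d$ satisfy identical recursions with identical starting values, whence $\mu^d(k) = \nu^d(k)$ for all $d \in \N$ and all odd $k \geq 1$ by induction on $d$.

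In more detail: the case $d=0$ is read directly off the two displayed base-case formulas. For the inductive step, assume $\mu^d(k) = \nu^d(k)$ for every odd $k$. Then for each odd $k$ the expression for $\mu^{d+1}(k)$ supplied by Lemma~\ref{recursive mu} is the very same combination of the values $\mu^d(\cdot)$ that the expression for $\nu^{d+1}(k)$ in Lemma~\ref{recursive nu} is of the values $\nu^d(\cdot)$; substituting the inductive hypothesis term by term yields $\mu^{d+1}(k) = \nu^{d+1}(k)$, which closes the induction.

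The only point requiring a word of care is bookkeeping at the bottom of the induction: Lemma~\ref{recursive mu} asserts its recursion only for $d \geq 1$, so to launch the argument one should also record $d=1$ explicitly. Since $\otimes^1 V_3 = V_3$ carries the single Jordan block $J_3(0)$, we have $\mu^1(3)=1$ and $\mu^1(k)=0$ for $k \neq 3$, which matches $\nu^1$ as computed from Lemma~\ref{recursive nu}; equivalently, one checks that the $\mu$-recursion is in fact also valid at $d=0$. There is no genuine obstacle here — all the real content has already been placed in the two preceding lemmas, which translate the path combinatorics of $\Gamma$ and the Jordan decomposition of $\otimes^d V_3$ into the same arithmetic recursion.
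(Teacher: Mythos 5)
Your proof is correct and is exactly the argument the paper has in mind: the corollary is declared ``immediate'' precisely because Lemmas~\ref{recursive mu} and~\ref{recursive nu} give identical initial data and identical recursions, so induction on $d$ finishes it. Your extra care about launching the induction (checking $d=1$, or noting the $\mu$-recursion also holds at $d=0$) is a fine touch but adds nothing beyond the paper's intent.
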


\section{A Vector Space Basis for $\Sml^\Delta$}

For the remainder of this paper, $d$ is a fixed positive integer.

We define the following multi-linear elements of $S^\Delta$:
  \begin{enumerate}
      \item $F_{\{i\}} := X_i$ where $1 \leq i \leq md$;
      \item $F_{\{i,j\}} := X_i Y_j - X_j Y_i$ where $1 \leq i < j \leq md$;
      \item $G_{\{i,j\}} := X_i Z_j - Y_i Y_j + Z_i Y_j$ where $1 \leq i < j \leq md$;
      \item $G_{\{i,j,k\}} = \det \left(
          \begin{matrix}
              X_i & Y_i & Z_i\\
              X_j & Y_j & Z_j\\
              X_k & Y_k & Z_k
          \end{matrix}       \right)$ where $1 \leq i < j < k \leq md$.
   \end{enumerate}

 From these elements we inductively construct two families of multi-linear elements of $S^\Delta$ as follows.
      \begin{enumerate}
         \item
%        $f_{(i_1,i_2,\dots,i_t)} := f_{(i_2,i_4,i_5,i_6\dots,i_t)}g_{(i_1,i_3)} - f_{(i_1,i_4,i_5,i_6\dots,i_t)}g_{(i_2,i_3)}$\\
        $F_{\{i_1,i_2,\dots,i_t\}} := F_{\{i_2,i_4,i_5,i_6\dots,i_t\}}G_{\{i_1,i_3\}} - F_{\{i_1,i_4,i_5,i_6\dots,i_t\}}G_{\{i_2,i_3\}}$\\
         where $1 \leq i_1 < i_2 < \dots < i_t \leq md$ and $t \geq 3$.
 \item
        %$g_{(i_1,i_2,\dots,i_t)} := g_{(i_2,i_4,i_5,i_6\dots,i_t)}g_{(i_1,i_3)} - g_{(i_1,i_4,i_5,i_6\dots,i_t)}g_{(i_2,i_3)}$\\
        $G_{\{i_1,i_2,\dots,i_t\}} := G_{\{i_2,i_4,i_5,i_6\dots,i_t\}}G_{\{i_1,i_3\}} - G_{\{i_1,i_4,i_5,i_6\dots,i_t\}}G_{\{i_2,i_3\}}$\\
            where $1 \leq i_1 < i_2 < \dots < i_t \leq md$ and $t \geq 4$.
      \end{enumerate}

      We denote the union of these families by $\B{}$ and we write $\B{d}$ to denote those products of elements of $\B{}$ which have
      total degree $d$ and lie in  $\Sml$.

     We use the lexicographic order on $S$ determined by
$$Z_{md} > Y_{md} > X_{md} > Z_{md-1} > Y_{md-1} > X_{md-1} > \dots > Z_1 > Y_1 > X_1\ .$$
The following lemma exhibits the two largest terms for elements of $\B{}$.
\begin{lemma}
    Let $1 < i_1 < i_2 \dots < i_t \leq md$.
    Then
  \begin{enumerate}
    % \item $f_{(i_1,i_2,\dots,i_t)} = x_{i_1} y_{i_2} y_{i_3} y_{i_4}  \cdots y_{i_t}
     %                                                 - y_{i_1} x_{i_2} y_{i_3} y_{i_4} \cdots y_{i_t} + \lot$,\\ if $t \geq 2$.
     \item $F_{\{i_1,i_2,\dots,i_t\}} = X_{i_1} Y_{i_2} Y_{i_3} Y_{i_4}  \cdots Y_{i_t}
                                                      - Y_{i_1} X_{i_2} Y_{i_3} Y_{i_4} \cdots Y_{i_t} + \lot,$\\ if $t \geq 2$.
     \item \label{g_part}
%     $g_{(i_1,i_2,\dots,i_t)} = x_{i_1} y_{i_2} y_{i_3} y_{i_4}  \cdots y_{i_{t-1}} z_{i_t}
 %                                                     - y_{i_1} x_{i_2} y_{i_3} y_{i_4} \cdots y_{i_{t-1}} z_{i_t} + \lot$,\\ if $t \geq 3$.
     $G_{\{i_1,i_2,\dots,i_t\}} = X_{i_1} Y_{i_2} Y_{i_3} Y_{i_4}  \cdots Y_{i_{t-1}} Z_{i_t}
                                                      - Y_{i_1} X_{i_2} Y_{i_3} Y_{i_4} \cdots Y_{i_{t-1}} Z_{i_t}\\ + \lot,$ if $t \geq 3$.
  \end{enumerate}
  where $\lot$\ denotes lower order terms.
\end{lemma}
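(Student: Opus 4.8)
The plan is to prove the two assertions together by induction on $t$, running the induction along the recursive definitions of the $F$'s and $G$'s. The base cases are $F_{\{i_1,i_2\}}$ (the case $t=2$) and $G_{\{i_1,i_2,i_3\}}$ (the case $t=3$), together with the elements $F_{\{i\}}=X_i$ and the two-index $G_{\{a,b\}}$ at which the two recursions bottom out; for each of these one expands (a $3\times 3$ determinant, in the case of $G_{\{i_1,i_2,i_3\}}$) and reads off the largest monomials. The one thing to keep in mind is that, because $i_1<i_2<\dots<i_t$, every variable with index $i_k$ exceeds every variable with index $i_\ell$ whenever $k>\ell$, while within a single index $Z>Y>X$; thus comparing two monomials that are multilinear in $i_1,\dots,i_t$ amounts to comparing them index by index, starting from the top index $i_t$. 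With this observation every comparison is routine, and one gets $F_{\{i_1,i_2\}}=X_{i_1}Y_{i_2}-Y_{i_1}X_{i_2}$, $G_{\{i_1,i_2,i_3\}}=X_{i_1}Y_{i_2}Z_{i_3}-Y_{i_1}X_{i_2}Z_{i_3}+\lot$, and the ordered list of monomials of a two-index $G_{\{a,b\}}$ is $X_aZ_b>Y_aY_b>Z_aX_b$.

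For the inductive step I would write the defining relation for $F_{\{i_1,\dots,i_t\}}$ as $PQ-P'Q'$, where $Q=G_{\{i_1,i_3\}}$ and $Q'=G_{\{i_2,i_3\}}$ are two-index elements, each a sum of exactly three monomials (in decreasing order, of the shapes $X_aZ_b$, $Y_aY_b$, $Z_aX_b$), and where $P=F_{\{i_2,i_4,\dots,i_t\}}$ and $P'=F_{\{i_1,i_4,\dots,i_t\}}$ are strictly shorter elements of the same family, whose two largest monomials are supplied by the inductive hypothesis (or, when the recursion bottoms out, by the base cases — the variables $X_i$ for the $F$-recursion, a two-index $G$ for the $G$-recursion). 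The treatment of $G_{\{i_1,\dots,i_t\}}$ is word-for-word the same, with $P,P'$ replaced by the corresponding $G$'s and with the trailing $Y_{i_t}$ replaced by $Z_{i_t}$ throughout. The first observation is that $\LM(PQ)=\LM(P)\LM(Q)=\LM(P')\LM(Q')=\LM(P'Q')$ — all four products equal $X_{i_1}X_{i_2}Z_{i_3}$ times the common factor $Y_{i_4}\cdots Y_{i_t}$ — so these leading terms cancel in the difference, and the two largest surviving terms must be hunted among the second-order products.

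The device that I expect to do the real work — and the step I regard as the main obstacle, precisely because the naive leading term of the product has just cancelled — is to compute the coefficients of individual monomials directly, exploiting multilinearity in place of tracking the full lower-order tails of $P$ and $P'$. Each monomial of $PQ$ is multilinear in $i_1,\dots,i_t$, so the coefficient in $PQ$ of a prescribed multilinear monomial $\mu$ is the product of the coefficient in $Q$ of the restriction of $\mu$ to the index set $\{i_1,i_3\}$ and the coefficient in $P$ of the restriction of $\mu$ to $\{i_2,i_4,\dots,i_t\}$; since $Q$ and $Q'$ carry only the three monomials $X_aZ_b$, $Y_aY_b$, $Z_aX_b$, this coefficient vanishes for all but a few $\mu$. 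Using this, one checks that in $PQ-P'Q'$ the monomial $\mu_1:=X_{i_1}Y_{i_2}Y_{i_3}\cdots Y_{i_t}$ has coefficient $+1$ (it comes only from $\LM(P')\cdot(-Y_{i_2}Y_{i_3})$ inside $P'Q'$, and not at all from $PQ$, because $\mu_1$ restricted to $\{i_1,i_3\}$ is $X_{i_1}Y_{i_3}$, not a monomial of $Q$), that $\mu_2:=Y_{i_1}X_{i_2}Y_{i_3}\cdots Y_{i_t}$ has coefficient $-1$ (coming only from $(-Y_{i_1}Y_{i_3})\cdot\LM(P)$ inside $PQ$), and — by the same computation together with the index-by-index comparison — that $\mu_1>\mu_2$, that $\LM(PQ)$ is the only monomial of $PQ$ or of $P'Q'$ larger than $\mu_1$, and that the unique monomial strictly between $\mu_2$ and $\mu_1$ in the order, namely $Z_{i_1}X_{i_2}Y_{i_3}\cdots Y_{i_t}$, occurs in neither $PQ$ nor $P'Q'$. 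Assembling these facts gives $F_{\{i_1,\dots,i_t\}}=\mu_1-\mu_2+\lot=X_{i_1}Y_{i_2}Y_{i_3}\cdots Y_{i_t}-Y_{i_1}X_{i_2}Y_{i_3}\cdots Y_{i_t}+\lot$, which closes the induction for part (1); part (2) follows by the identical argument with $Z_{i_t}$ in place of the final $Y_{i_t}$.
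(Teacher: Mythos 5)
Your proposal is correct and follows essentially the same route as the paper: induction on $t$ along the defining recursions, direct verification of the small cases, and cancellation of the common top term $X_{i_1}X_{i_2}Z_{i_3}Y_{i_4}\cdots$ in the difference of the two products. The only difference is in bookkeeping: where the paper expands the products of the two leading terms of the shorter factor against the three terms of the quadratic $G$'s and tracks everything else as $\lot$, you extract coefficients of the target monomials directly via multilinearity and disjoint supports and check that no monomial above $X_{i_1}Y_{i_2}\cdots$ or strictly between the two claimed terms survives, which is a slightly tidier justification of the same computation.
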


\begin{proof}
  The proof is by induction on $t$.  The result is straightforward to verify for $t=2,3$.  For higher values of $t$ we have
  (using induction)
  \begin{align*}
%     f&_{(i_1,i_2,\dots,i_t)} = f_{(i_2,i_4,i_5\dots,i_t)}g_{(1,3)} - f_{(i_1,i_4,i_5,i_6\dots,i_t)}g_{(i_2,i_3)}\\
%            &= (x_{i_2}y_{i_4} y_{i_5} \cdots y_t - y_{i_1} x_{i_2} y_{i_3} \cdots y_{i_t} + \lot)
%                      (x_{i_1}z_{i_3} - y_{i_1}y_{i_3} + z_{i_1}y_{i_3})\\
%                      &\quad- (x_{i_1}y_{i_4} y_{i_5} \cdots y_t - y_{i_1} x_{i_4} y_{i_5} \cdots y_{i_t} + \lot)
%                      (x_{i_2}z_{i_3} - y_{i_2}y_{i_3} + z_{i_2}y_{i_3})\\
%        &= x_{i_1}x_{i_2}z_{i_3}y_{i_4}y_{i_5} \cdots y_{i_t} - x_{i_1}y_{i_2}z_{i_3}y_{i_4}y_{i_5} \cdots y_{i_t} + \lot\\
%        &\quad -y_{i_1}x_{i_2}y_{i_3}y_{i_4}y_{i_5} \cdots y_{i_t} + y_{i_1}y_{i_2}y_{i_3}x_{i_4}y_{i_5} \cdots y_{i_t}+\lot\\
%        &\quad + z_{i_1}x_{i_2}x_{i_3}y_{i_4}y_{i_5} \cdots y_{i_t} - z_{i_1}y_{i_2}x_{i_3}x_{i_4}y_{i_5} \cdots y_{i_t}+ \lot\\
%        %
%        &\quad - x_{i_1}x_{i_2}z_{i_3}y_{i_4}y_{i_5} \cdots y_{i_t} + y_{i_1}x_{i_2}z_{i_3}x_{i_4}y_{i_5} \cdots y_{i_t}+\lot\\
%        &\quad + x_{i_1}x_{i_2}z_{i_3}y_{i_4}y_{i_5} \cdots y_{i_t} - y_{i_1}y_{i_2}y_{i_3}x_{i_4}y_{i_5} \cdots y_{i_t} +\lot\\
%        &\quad  - x_{i_1}z_{i_2}x_{i_3}y_{i_4}y_{i_5} \cdots y_{i_t} + y_{i_1}z_{i_2}x_{i_3}x_{i_4}y_{i_5} \cdots y_{i_t} + \lot\\
%        %
%        &= x_{i_1}y_{i_2}y_{i_3}y_{i_4}y_{i_5} \cdots y_{i_t} - y_{i_1}x_{i_2}y_{i_3}y_{i_4}y_{i_5} \cdots y_{i_t} + \lot
     F&_{\{i_1,i_2,\dots,i_t\}} = F_{\{i_2,i_4,i_5\dots,i_t\}}G_{\{1,3\}} - F_{\{i_1,i_4,i_5,i_6\dots,i_t\}}G_{\{i_2,i_3\}}\\
            &= (X_{i_2}Y_{i_4} Y_{i_5} \cdots Y_{i_t} - Y_{i_1} X_{i_2} Y_{i_3} \cdots Y_{i_t} + \lot)
                      (X_{i_1}Z_{i_3} - Y_{i_1}Y_{i_3} + Z_{i_1}Y_{i_3})\\
                      &\quad- (X_{i_1}Y_{i_4} Y_{i_5} \cdots Y_{i_t} - Y_{i_1} X_{i_4} Y_{i_5} \cdots Y_{i_t} + \lot)
                      (X_{i_2}Z_{i_3} - Y_{i_2}Y_{i_3} + Z_{i_2}Y_{i_3})\\
        &= X_{i_1}X_{i_2}Z_{i_3}Y_{i_4}Y_{i_5} \cdots Y_{i_t} - X_{i_1}Y_{i_2}Z_{i_3}Y_{i_4}Y_{i_5} \cdots Y_{i_t} + \lot\\
        &\quad -Y_{i_1}X_{i_2}Y_{i_3}Y_{i_4}Y_{i_5} \cdots Y_{i_t} + Y_{i_1}Y_{i_2}Y_{i_3}X_{i_4}Y_{i_5} \cdots Y_{i_t}+\lot\\
        &\quad + Z_{i_1}X_{i_2}X_{i_3}Y_{i_4}Y_{i_5} \cdots Y_{i_t} - Z_{i_1}Y_{i_2}X_{i_3}X_{i_4}Y_{i_5} \cdots Y_{i_t}+ \lot\\
        &\quad - X_{i_1}X_{i_2}Z_{i_3}Y_{i_4}Y_{i_5} \cdots Y_{i_t} + Y_{i_1}X_{i_2}Z_{i_3}X_{i_4}Y_{i_5} \cdots Y_{i_t}+\lot\\
        &\quad + X_{i_1}X_{i_2}Z_{i_3}Y_{i_4}Y_{i_5} \cdots Y_{i_t} - Y_{i_1}Y_{i_2}Y_{i_3}X_{i_4}Y_{i_5} \cdots Y_{i_t} +\lot\\
        &\quad  - X_{i_1}Z_{i_2}X_{i_3}Y_{i_4}Y_{i_5} \cdots Y_{i_t} + Y_{i_1}Z_{i_2}X_{i_3}X_{i_4}Y_{i_5} \cdots Y_{i_t} + \lot\\
        &= X_{i_1}Y_{i_2}Y_{i_3}Y_{i_4}Y_{i_5} \cdots Y_{i_t} - Y_{i_1}X_{i_2}Y_{i_3}Y_{i_4}Y_{i_5} \cdots Y_{i_t} + \lot
     \end{align*}
         The proof for (\ref{g_part}) is similar with the cases $t=3,4$ being easily verified.
  \begin{align*}
%     g&_{(i_1,i_2,\dots,i_t)} = g_{(i_2,i_4,i_5\dots,i_t)}g_{(1,3)} - g_{(i_1,i_4,i_5,i_6\dots,i_t)}g_{(i_2,i_3)}\\
%            &= (x_{i_2}y_{i_4} y_{i_5} \cdots y_t - y_{i_1} x_{i_2} y_{i_3} \cdots y_{i_{t-1}}z_{i_t} + \lot)
%                      (x_{i_1}z_{i_3} - y_{i_1}y_{i_3} + z_{i_1}y_{i_3})\\
%                      &\quad- (x_{i_1}y_{i_4} y_{i_5} \cdots y_t - y_{i_1} x_{i_4} y_{i_5} \cdots y_{i_{t-1}}z_{i_t} + \lot)
%                      (x_{i_2}z_{i_3} - y_{i_2}y_{i_3} + z_{i_2}y_{i_3})\\
%        &= x_{i_1}x_{i_2}z_{i_3}y_{i_4}y_{i_5} \cdots y_{i_{t-1}}z_{i_t} - x_{i_1}y_{i_2}z_{i_3}y_{i_4}y_{i_5} \cdots y_{i_{t-1}}z_{i_t} + \lot\\
%        &\quad -y_{i_1}x_{i_2}y_{i_3}y_{i_4}y_{i_5} \cdots y_{i_{t-1}}z_{i_t} + y_{i_1}y_{i_2}y_{i_3}x_{i_4}y_{i_5} \cdots y_{i_{t-1}}z_{i_t}+\lot\\
%        &\quad + z_{i_1}x_{i_2}x_{i_3}y_{i_4}y_{i_5} \cdots y_{i_{t-1}}z_{i_t} - z_{i_1}y_{i_2}x_{i_3}x_{i_4}y_{i_5} \cdots y_{i_{t-1}}z_{i_t}+ \lot\\
%        %
%        &\quad - x_{i_1}x_{i_2}z_{i_3}y_{i_4}y_{i_5} \cdots y_{i_{t-1}}z_{i_t} + y_{i_1}x_{i_2}z_{i_3}x_{i_4}y_{i_5} \cdots y_{i_{t-1}}z_{i_t}+\lot\\
%        &\quad + x_{i_1}x_{i_2}z_{i_3}y_{i_4}y_{i_5} \cdots y_{i_{t-1}}z_{i_t} - y_{i_1}y_{i_2}y_{i_3}x_{i_4}y_{i_5} \cdots y_{i_{t-1}}z_{i_t} +\lot\\
%        &\quad  - x_{i_1}z_{i_2}x_{i_3}y_{i_4}y_{i_5} \cdots y_{i_{t-1}}z_{i_t} + y_{i_1}z_{i_2}x_{i_3}x_{i_4}y_{i_5} \cdots y_{i_{t-1}}z_{i_t} + \lot\\
%        %
%        &= x_{i_1}y_{i_2}y_{i_3}y_{i_4}y_{i_5} \cdots y_{i_{t-1}}z_{i_t} - y_{i_1}x_{i_2}y_{i_3}y_{i_4}y_{i_5} \cdots y_{i_{t-1}}z_{i_t} + \lot
     G&_{\{i_1,i_2,\dots,i_t\}} = G_{\{i_2,i_4,i_5\dots,i_t\}}G_{\{{i_1},{i_3}\}} - G_{\{i_1,i_4,i_5,i_6\dots,i_t\}}G_{\{i_2,i_3\}}\\
            &= (X_{i_2}Y_{i_4} Y_{i_5} \cdots Y_{i_t} - Y_{i_1} X_{i_2} Y_{i_3} \cdots Y_{i_{t-1}}Z_{i_t} + \lot)
                      (X_{i_1}Z_{i_3} - Y_{i_1}Y_{i_3} + Z_{i_1}Y_{i_3})\\
                      &\quad- (X_{i_1}Y_{i_4} Y_{i_5} \cdots Y_{i_t} - Y_{i_1} X_{i_4} Y_{i_5} \cdots Y_{i_{t-1}}Z_{i_t} + \lot)
                      (X_{i_2}Z_{i_3} - Y_{i_2}Y_{i_3} + Z_{i_2}Y_{i_3})\\
        &= X_{i_1}X_{i_2}Z_{i_3}Y_{i_4}Y_{i_5} \cdots Y_{i_{t-1}}Z_{i_t} - X_{i_1}Y_{i_2}Z_{i_3}Y_{i_4}Y_{i_5} \cdots Y_{i_{t-1}}Z_{i_t} + \lot\\
        &\quad -Y_{i_1}X_{i_2}Y_{i_3}Y_{i_4}Y_{i_5} \cdots Y_{i_{t-1}}Z_{i_t} + Y_{i_1}Y_{i_2}Y_{i_3}X_{i_4}Y_{i_5} \cdots Y_{i_{t-1}}Z_{i_t}+\lot\\
        &\quad + Z_{i_1}X_{i_2}X_{i_3}Y_{i_4}Y_{i_5} \cdots Y_{i_{t-1}}Z_{i_t} - Z_{i_1}Y_{i_2}X_{i_3}X_{i_4}Y_{i_5} \cdots Y_{i_{t-1}}Z_{i_t}+ \lot\\
        &\quad - X_{i_1}X_{i_2}Z_{i_3}Y_{i_4}Y_{i_5} \cdots Y_{i_{t-1}}Z_{i_t} + Y_{i_1}X_{i_2}Z_{i_3}X_{i_4}Y_{i_5} \cdots Y_{i_{t-1}}Z_{i_t}+\lot\\
        &\quad + X_{i_1}X_{i_2}Z_{i_3}Y_{i_4}Y_{i_5} \cdots Y_{i_{t-1}}Z_{i_t} - Y_{i_1}Y_{i_2}Y_{i_3}X_{i_4}Y_{i_5} \cdots Y_{i_{t-1}}Z_{i_t} +\lot\\
        &\quad  - X_{i_1}Z_{i_2}X_{i_3}Y_{i_4}Y_{i_5} \cdots Y_{i_{t-1}}Z_{i_t} + Y_{i_1}Z_{i_2}X_{i_3}X_{i_4}Y_{i_5} \cdots Y_{i_{t-1}}Z_{i_t} + \lot\\
        &= X_{i_1}Y_{i_2}Y_{i_3}Y_{i_4}Y_{i_5} \cdots Y_{i_{t-1}}Z_{i_t} - Y_{i_1}X_{i_2}Y_{i_3}Y_{i_4}Y_{i_5} \cdots Y_{i_{t-1}}Z_{i_t} + \lot
     \end{align*}
\end{proof}

  In Proposition~\ref{Path basis}, we will prove that $\B{d}$ is a vector space basis for $\Sml^\Delta$.

\section{Definition of $\theta$ and $\phi$}

Recall that $M_d$ denotes the set of path monomials arising from paths of length $d$ and
that $\Lambda : \Path{d} \to M_d$.
We will define set maps
$$\phi:M_d \to \Sml^{\Delta}$$ and
$$\theta=\phi\circ\Lambda:\Path{d} \to \Sml^{\Delta}$$
such that $\LM(\theta(\gamma)) = \Lambda(\gamma)$.
Furthermore, $\theta(\gamma)$ will be a product of elements from $\B{}$ and so $\theta(\gamma) \in \B{d}$.
%Since $\theta = \phi\circ\Lambda$ it will suffice to define $\phi$.

 Let $\alpha$ be a monomial in $S$.
Put $\supp_X(\alpha) = \{i \mid X_i\text{ divides }\alpha\}$,
$\supp_Y(\alpha) = \{i \mid Y_i\text{ divides }\alpha\}$,
$\supp_Z(\alpha) = \{i \mid Z_i\text{ divides }\alpha\}$ and
$\supp(\alpha) = \supp_X(\alpha) \cup \supp_Y(\alpha) \cup \supp_Z(\alpha)$.
Suppose that $\alpha \in M_d$.  Note that $\supp(\alpha)$ is in the interval of integers $[1,d]$.

%Consider a path $\gamma \in  \Path{d}$ and let $\alpha = \alpha(\gamma)$ denote the monomial label
%for the path $\gamma$.

We define $\phi(\alpha)$ as follows.   Write $\supp_Z(\alpha) = \{k_1, k_2, \dots, k_s\}$
where $k_1 < k_2 < \dots < k_s$.  We begin by defining $$\{i_1,i_2,\dots,i_s\} \subset \supp_X(\alpha)$$
with $i_1 > i_2 > \dots > i_s$.  Let 
$$i_1 := \max\{i \in \supp_X(\alpha) \mid i < k_1\}$$
 and put
$I_1 := [i_1,k_1]$ (an interval in $\N$).
Let $$i_2 := \max \{i \in \supp_X(\alpha) \mid i < i_1\}$$ and $I_2 := [i_2,k_2] \setminus I_1$.
In general, $$i_q := \max \{i \in \supp_X(\alpha) \mid i < i_{q-1}\}$$ and
$$I_q := [i_q,k_q] \setminus (\sqcup_{\ell=1}^{q-1} I_\ell)$$ for $q=2,\dots,s$.

 Define $\ba := \alpha / (\prod_{\ell=1}^s \LM(G_{I_\ell}))$.
 Then $\supp(\ba) = [1,d] \setminus (\sqcup_{\ell=1}^s I_\ell)$.
 For each $j \in \supp_Y(\ba)$ we define 
 $$\succ(j) := \min\{i \in \supp(\ba) \mid i > j\}.$$
 Let 
 $$\{j_1,j_s,\dots,j_t\} = \{ j \in \supp_Y(\ba) \mid \succ(j) \notin \supp_Y(\ba)\}$$
  where
 $j_1 < j_2 < \dots < j_t$.   Next we define 
 $$\{i'_1,i'_2,\dots,i'_t\} \subset \supp_X(\ba)\}$$
  with
 $i'_1 > i'_2 > \dots > i'_t$ as follows.
  Let 
  $$i'_1 := \max\{ i \in \supp_X(\ba) \mid i < j_1\}$$
   and $I'_1 := [i'_1,j_1] \cap \supp(\ba)$.
  Let 
  $$i'_2 := \max\{ i \in \supp_X(\ba)  \mid i < i_2\}$$ and 
  $$I'_2 := ([i'_2,j_2] \setminus I'_1) \cap \supp(\ba).$$
  In general, $$i'_q := \max\{ i \in \supp_X(\ba) \mid i_q < i_{q-1}\}$$
  and $$I'_q := ([i'_q,j_q] \setminus (\sqcup_{\ell=1}^{q-1} I'_\ell)) \cap \supp(\ba).$$
  We put 
  $$I'' := [1,d] \setminus ((\sqcup_{\ell=1}^s I_\ell) \sqcup (\sqcup_{\ell=1}^t I'_\ell)).$$
  Note that $I'' \subseteq \supp_X(\ba)$.

  Finally, we define
  $$\phi(\alpha) := (\prod_{\ell=1}^s G_{I_\ell})\cdot (\prod_{\ell=1}^t F_{I'_\ell}) \cdot (\prod_{i \in I''} F_{\{i\}})\ .$$

  For each $\ell=1,2,\dots,s$ we have $\min I_\ell = i_\ell$ and $\max I_\ell = k_\ell$.
  Define 
  $$J_\ell := \{ j \in I_\ell \mid i_\ell < j < k_\ell\}.$$
    Then
  $\LM(G_{I_\ell}) = X_{i_\ell} \cdot (\prod_{j \in J_\ell} Y_j) \cdot Z_{k_\ell}$.

  For each $\ell=1,2,\dots,t$ we have $\min I'_\ell = i'_\ell$ and $\max I'_\ell = j_\ell$.
  Define $J'_\ell := I'_\ell \setminus \{i'_\ell\}$.
  Then $\LM(F_{I'_\ell}) = X_{i'_\ell} \cdot (\prod_{j \in J'_\ell} Y_j)$.

  Therefore $\LM(\phi(\alpha)) = \alpha$ as required.
  Furthermore, $\phi(\alpha)$ is a product of elements of $\B{}$ and thus $\phi(\alpha)\in \B{d}$.

\section{Proof of the Main Theorem}

We now prove that degree $d$ path monomials are exactly the lead monomials of degree $d$ multi-linear $\Delta$-constants,
Thus
$\theta$ provides a bijection between paths of length $d$ and a basis of the degree $d$ multi-linear $\Delta$-constants,
$\Sml^\Delta$

\begin{proposition}\label{Path basis}
   $$
    M_d = \{ \LM(f) \mid  \deg(f)=d, f \in \Sml^{\Delta}\}\ .
   $$
Moreover, $\{\theta(\gamma) \mid \gamma \in \Path{d}\}$ is a vector space basis for
$\Sml^\Delta$.
\end{proposition}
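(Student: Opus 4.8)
The plan is to establish a chain of inclusions and dimension counts that forces equality. First I would observe that we have already constructed, in the previous section, a set map $\phi:M_d \to \B{d} \subseteq \Sml^\Delta$ with the property that $\LM(\phi(\alpha)) = \alpha$ for every $\alpha \in M_d$. Composing with $\Lambda:\Path{d}\to M_d$ gives $\theta = \phi\circ\Lambda$ and hence, since $\Lambda$ is a bijection onto $M_d$ by construction of $\Gamma$, the set $\{\theta(\gamma)\mid\gamma\in\Path{d}\}$ has exactly $\#\Path{d} = \sum_k \nu^d(k)$ elements, with pairwise distinct lead monomials. Distinct lead monomials immediately implies these elements are $K$-linearly independent in $\Sml^\Delta$. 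This gives the lower bound $\dim_K \Sml^\Delta \geq \#\Path{d}$ and also the inclusion $M_d \subseteq \{\LM(f)\mid \deg(f)=d,\ f\in\Sml^\Delta\}$.

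For the reverse direction I would invoke the Jordan-form computation of the earlier sections. By Corollary~\ref{counting corollary} together with the identification $\Sml \cong \otimes^d V_3$ from Section~4, the dimension of $\Sml^\Delta$ as a $K$-vector space equals the total number of Jordan blocks of $\Delta$ acting on $\otimes^d V_3$, namely $\sum_{k} \mu^d(k) = \sum_k \nu^d(k) = \#\Path{d}$ (the kernel of a nilpotent operator meets each Jordan block in a one-dimensional space). Hence $\dim_K \Sml^\Delta = \#\Path{d}$, and the linearly independent set $\{\theta(\gamma)\mid\gamma\in\Path{d}\}$ must already be a basis. This proves the ``moreover'' clause.

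Finally, for the set equality of lead monomials: the inclusion $M_d \subseteq \{\LM(f)\mid f\in\Sml^\Delta,\ \deg f=d\}$ is already noted. For the opposite inclusion, suppose $f\in\Sml^\Delta$ is homogeneous of degree $d$; write $f = \sum_\gamma c_\gamma\,\theta(\gamma)$ in the basis just obtained. Since the $\LM(\theta(\gamma)) = \Lambda(\gamma)$ are pairwise distinct monomials, $\LM(f)$ equals $\Lambda(\gamma_0)$ for the $\gamma_0$ with $c_{\gamma_0}\neq 0$ whose associated monomial is largest, and this lies in $M_d$. Therefore $\{\LM(f)\mid \deg f = d,\ f\in\Sml^\Delta\} \subseteq M_d$, completing the proof.

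The main obstacle is not in this bookkeeping but in having set up $\phi$ correctly: the delicate point, already handled in the construction of $\phi$, is that the greedy interval assignment producing the factors $G_{I_\ell}$, $F_{I'_\ell}$, $F_{\{i\}}$ really does reproduce $\alpha$ as the lead monomial with no collisions among the chosen indices — i.e., that $I'' \subseteq \supp_X(\ba)$ and that the $I_\ell$, $I'_\ell$ are genuinely disjoint intervals exhausting $[1,d]$. Granting that (it is established in the preceding section), the proof of the proposition itself reduces to the dimension count above.
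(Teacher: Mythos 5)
Your proposal is correct and follows essentially the same route as the paper: use $\LM(\phi(\alpha))=\alpha$ to get the inclusion $M_d \subseteq \{\LM(f)\}$ and the linear independence of $\theta(\Path{d})$, then match $\#\Path{d}=\sum_k\nu^d(k)=\sum_k\mu^d(k)=\dim_K \Sml^\Delta$ via the Jordan-block count to conclude both the basis statement and the equality of the two sets of lead monomials. The only cosmetic difference is that you obtain the reverse inclusion by expanding an arbitrary $f$ in the basis and extracting its largest surviving lead monomial, whereas the paper counts cardinalities using the fact that the number of distinct lead monomials of a subspace equals its dimension; these are interchangeable.
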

\begin{proof}
   Since $\LM(\phi(\alpha)) = \alpha$ for all $\alpha \in M_d$ it follows that
   $M_d \subseteq \{ \LM(f) \mid  \deg(f)=d, f \in \Sml^{\Delta}\}$.
  Since 
     \begin{align*}
          \# M_d &= \#\Path{d}~
                       = \sum_{k {\rm\ odd}, k \leq d} \nu_k(d)\\
            & =  \sum_{k {\rm\ odd}, k \leq d} \mu_k(d)
             = \dim \Sml^{\Delta}\\
            & =  \# \{\LM(f) \mid \deg(f)=d, f \in \Sml^{\Delta}\},
      \end{align*}
      we see that
        $M_d = \{ \LM(f) \mid  \deg(f)=d, f \in \Sml^{\Delta}\}$.

    Furthermore, $\LM(\phi(\alpha)) = \alpha$ for all $\alpha \in M_d$ implies that
    the set $\phi(M_d)=\theta(\Path{d})$ is linearly independent.  Therefore $\theta(\Path{d})$ is a basis of $\Sml^{\Delta}$.
\end{proof}

\begin{remark}
  In fact it is possible to show that if $\gamma$ is a path from the root to $(k,d)$ then $\theta(\gamma)$ is
  an eigenvector corresponding to a Jordan block of size $k$.
\end{remark}

  Suppose $h$ lies in the algebra of constants $R^\Delta$.   Further suppose that $h$ is homogeneous
of degree $d$. 
Let $H$ denote the full polarization $\Pol(h)$ of $h$.  Then $H \in \Sml^{\Delta}$.
Thus $H = \sum_{E \in \B{d}} c_E E$ for constants $c_E \in K$.   In fact we  may compute these coefficients $c_E$
as follows.  We know $\LM(H) = \Lambda(\gamma_1)=\LM(\theta(\gamma_1))$ for some $\gamma_1 \in \Path{d}$.  Then the lead term of $H$
is $c_{\gamma_1} \LM(\theta(\gamma_1))$ for some scalar $c_{\gamma_1}$.
Put $H_2 = H- c_{\gamma_1}\theta({\gamma_1})$.    Then $H_2 \in  \Sml^{\Delta}$ and so $\LM(H_2) = \LM(\theta(\gamma_2))$ for some $\gamma_2 \in \Path{d}$.
Hence the lead term of $H_2$ is $c_{\gamma_2} \LM(\theta(\gamma_2))$ for some scalar $c_{\gamma_2}$.
Put $H_3 = H_2- c_{\gamma_2}\theta(\gamma_2)$.  Continuing in this manner we construct a sequence of polynomials
$H=H_1,H_2,H_3,\dots$ with $\LM(H_1) > \LM(H_2) > \LM(H_3) > \dots$.  Since such a decreasing sequence of monomials must be finite,
we will eventually find $H_{r+1}=0$ for some $r$.  Thus $H = \sum_{i=1}^r c_{\gamma_i} \theta(\gamma_i)$. 

Then $h =  \Res(H)= \sum_{i=1}^r c_{\gamma_i} \Res(\theta(\gamma_i))$ where each $\theta(\gamma_i) \in \B{d}$.
Each $E \in \B{d}$ is of the form $E=\prod_{I \in A} F_I \cdot \prod_{I' \in A'} G_{I'}$ for some index sets
$A$ and $A'$ contained in the power set of $\{1,2,\dots,md\}$.  Thus $\Res(E) = \prod_{I \in A} \Res(F_I) \cdot \prod_{I' \in A'} \Res(G_{I'})$.

\begin{lemma}  Let $Q$ denote the $K$-algebra generated by $\G$.
  \begin{enumerate}
    \item $\Res(F_I) \in Q$ for all $I \subseteq \{1,2,\dots,md\}$.
    \item $\Res(G_{I'}) \in Q$ for all $I' \subseteq \{1,2,\dots,md\}$ with $\# I' \geq 2$.
  \end{enumerate}
\end{lemma}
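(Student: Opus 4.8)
The plan is to prove both statements simultaneously by induction on $t := \#I$, using the recursive definitions of $F_I$ and $G_I$ from Section~6 together with the fact that $\Res$ is an algebra homomorphism, so that $\Res$ of a product is the product of the $\Res$'s of the factors. In particular, for a product $E = \prod_{I \in A} F_I \cdot \prod_{I' \in A'} G_{I'}$ it then suffices to treat the individual factors.

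First I would settle the base cases by direct computation. Write $\ell(i) := \lceil i/d\rceil$, so that $\Res$ sends $X_i, Y_i, Z_i$ to $x_{\ell(i)}/d!$, $y_{\ell(i)}/d!$, $z_{\ell(i)}/d!$. Then $\Res(F_{\{i\}}) = f_{(\ell(i))}/d! \in Q$. For $I = \{i,j\}$ with $i < j$, set $\ell = \ell(i) \le \ell' = \ell(j)$: if $\ell < \ell'$ then $\Res(F_{\{i,j\}})$ is a scalar multiple of $f_{(\ell,\ell')}$, while if $\ell = \ell'$ it is $0$; and in either case $\Res(G_{\{i,j\}})$ is a scalar multiple of $g_{(\ell,\ell')}$. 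For $I = \{i,j,k\}$, the $3\times 3$ matrix defining $G_I$ acquires two (or three) equal rows under $\Res$ as soon as two of $i,j,k$ lie in a common block, so then $\Res(G_{\{i,j,k\}}) = 0$; and when the three blocks $\ell(i), \ell(j), \ell(k)$ are distinct, $\Res(G_{\{i,j,k\}})$ is a scalar multiple of the corresponding generator $g_{(\cdot,\cdot,\cdot)}$. In every case the result lies in $Q$.

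For the inductive step, assume both claims for all index sets of size less than $t$. If $t \ge 3$, apply $\Res$ to
\[ F_{\{i_1,\dots,i_t\}} = F_{\{i_2,i_4,i_5,\dots,i_t\}}\,G_{\{i_1,i_3\}} - F_{\{i_1,i_4,i_5,\dots,i_t\}}\,G_{\{i_2,i_3\}}\,; \]
the two $F$-factors on the right are indexed by sets of size $t-2$ and the two $G$-factors by sets of size $2$, all $< t$, so by the induction hypothesis each of the four restituted factors lies in $Q$, hence so does $\Res(F_{\{i_1,\dots,i_t\}})$. The same argument applied to the analogous recursion for $G_{\{i_1,\dots,i_t\}}$ (valid for $t \ge 4$), whose right-hand side involves $G$'s indexed by sets of sizes $t-2$ and $2$, both $< t$, gives $\Res(G_{\{i_1,\dots,i_t\}}) \in Q$. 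This completes the induction, and the lemma follows.

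I do not expect a genuine obstacle here: the recursions of Section~6 carry the induction, and the homomorphism property of $\Res$ is exactly what legitimizes the reduction to smaller index sets. The one point demanding a little care is the base-case bookkeeping, where restitution identifies all the variables belonging to a common Jordan block; there one must check that the ``degenerate'' elements so produced --- $\Res(F_{\{i,j\}})$ and $\Res(G_{\{i,j,k\}})$ with repeated blocks --- collapse either onto a single element of $\G$ or onto $0$. This collapsing is, of course, precisely the phenomenon that lets the strictly increasing indexing of $\G$ in the Main Theorem suffice to generate $R^\Delta$.
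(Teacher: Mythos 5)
Your proposal is correct and follows essentially the same route as the paper: base cases computed directly (noting the collapse to a generator of $\G$ or to $0$ when indices share a block, including the allowed case $g_{(\ell,\ell)}$), then induction via the recursions $F_I = F_{I_1}G_{I_2} - F_{I_3}G_{I_4}$ and $G_{I'} = G_{I_1}G_{I_2} - G_{I_3}G_{I_4}$ together with the homomorphism property of $\Res$. The only cosmetic difference is that you run a single simultaneous induction on $\#I$, whereas the paper proves the $G$-statement first and then uses it in the induction for the $F$-statement.
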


\begin{proof}
  We prove the second assertion first.   We proceed by induction on the cardinality of the set $I'$.
  If $I' =\{i,j\}$ with $i<j$ then $\Res(G_{I'})=x_k z_\ell ?y_k y_\ell + z_k y_\ell$ where $k = \lceil i/d \rceil$ and
   $\ell = \lceil j/d \rceil$.  Thus  $\Res(G_{I'}) \in \G$.

If $I' = \{i,j,k\}$ with $i<j<k$ then $\Res(G_I) = g_{(a,b,c)}$ where $a = \lceil i/d \rceil$, $b = \lceil j/d \rceil$
  and $c = \lceil k/d \rceil$.    Thus either  $\Res(G_I)\in \G$ or  $\Res(G_I)=0$. 

 Assume, by induction, that the that the second assertion holds if $\#I' < t$ (where $t \geq 4$)
 and that $\#I'=t$.     Write $I=\{i_1,i_2,\dots,i_t\}$ where $i_1 < i_2 < \dots < i_t$.  
  Then 
  $$G_{I'} = G_{I_1} G_{I_2} - G_{I_3} G_{I_4}$$ where 
  \begin{align*} 
         I_1 &= \{i_2,i_4,i_5,i_6,\dots,i_t\}, \qquad I_2 = \{i_1,i_3\},\\
         I_3 &= \{i_1,i_4,i_5,i_6,\dots,i_t\} \text{ and }I_4 = \{i_2,i_3\}.
     \end{align*}
  Therefore $$\Res(G_I) = \Res(G_{I_1})\Res(G_{I_2}) - \Res(G_{I_3})\Res(G_{I_4}).$$
  Since $\Res(G_{I_1}),\Res(G_{I_2}),\Res(G_{I_2}),\Res(G_{I_4}) \in Q$ by the induction hypothesis, this implies that
  $\Res(G_{I'}) \in Q$.

  We prove the first assertion by induction on $\# I$.
  If $I =\{i\}$ then $\Res(F_I) = x_i \in \G$.
  
  If $I =\{i,j\}$ with $i <j$ then $\Res(F_I) = x_k y_\ell - x_k y_\ell$ where $k = \lceil i/d \rceil$ and
   $\ell = \lceil j/d \rceil$.
    Therefore either $\Res(F_I) \in \G$ or $\Res(F_I)=0$.

  Assume, by induction, that the first assertion holds if $\# I < t$ (where $t\geq 3$) and that $\#I=t$.
  Write $I=\{i_1,i_2,\dots,i_t\}$ where $i_1 < i_2 < \dots < i_t$.  
  Then 
  $$F_I = F_{I_1} G_{I_2} - F_{I_3} G_{I_4}$$ where 
  \begin{align*} 
         I_1 &= \{i_2,i_4,i_5,i_6,\dots,i_t\}, \qquad I_2 = \{i_1,i_3\},\\
         I_3 &= \{i_1,i_4,i_5,i_6,\dots,i_t\} \text{ and }I_4 = \{i_2,i_3\}.
     \end{align*}
  Therefore $$\Res(F_I) = \Res(F_{I_1})\Res(G_{I_2}) - \Res(F_{I_3})\Res(G_{I_4}).$$
  But $\Res(G_{I_2}),\Res(G_{I_4}) \in Q$ by the second assertion and
  $\Res(F_{I_1}),\Res(F_{I_3}) \in Q$ by the induction hypothesis.
  Therefore $\Res(F_I) \in Q$.
 \end{proof}

This lemma completes the proof that $R^\Delta$ is the $K$-algebra generated by $\G$.
Moreover, the proof of the above lemma provides an inductive algorithm for writing any element of $\B{}$
as a polynomial in the elements of $\G$.

\section{Higher degrees of Nilpotency}
In principal the method used here should work for Weitzenb\"ock derivations with Jordan blocks of size $k$ for any 
    fixed $k$.  However, for $k \geq 4$ the set of paths in the corresponding representation graph $\Gamma$ becomes 
    unmanageable.  In particular, here we have built all such paths up piecewise (in \S7) from two simple types of paths,
    corresponding to the two types of lead terms $X_{i_1}Y_{i_2}Y_{i_3}\cdots Y_{i_t}$ and
    $X_{i_1}Y_{i_2}Y_{i_3}\cdots Y_{i_{t-1}}Z_{i_t}$.  This has allowed us to generate the $\Delta$-constants 
    as an algebra using only two corresponding types ($F$ and $G$) of elements in $\Sml^\Delta$.  The large variability in the form of the paths 
    for the case $k=4$ seems to require an equally large collection $\Delta$-constants with corresponding lead terms.
    It seems that enumerating and constructing these $\Delta$-constants will be very difficult.

\section{Acknowledgements}
  I thank the anonymous referee for a number of very good suggestions which have improved the exposition and shortened some of the proofs.
 I thank Megan Wehlau for a number of useful discussions which led to this work.  The computer algebra program Magma \cite{magma}
   was very helpful in my early explorations of this problem.  
%\end{ack}

\end{document}